\theoremstyle{plain} {
  \newtheorem{thm}{Theorem}[section]
  \newtheorem{defn}[thm]{Definition}
  \newtheorem{cor}[thm]{Corollary}
  \newtheorem{lem}[thm]{Lemma}
  \newtheorem{prop}[thm]{Proposition}
  \theoremstyle{definition}
  \newtheorem{rem}[thm]{Remark}
    \newtheorem{constr}[thm]{Construction}
  \theoremstyle{plain}
  \newtheorem{clm}[thm]{Claim}
  \newtheorem{notation}[thm]{Notation}

}
\renewcommand{\subsubsection}{\sssection\rm}
\newcommand{\bE}{\mathbf E}
\newcommand{\bG}{\mathbf G}
\newcommand{\bH}{\mathbf H}
\newcommand{\bP}{\mathbf P}
\newcommand{\bT}{\mathbf T}
\newcommand{\bU}{\mathbf U}
\newcommand{\bu}{\mathbf u}
\newcommand{\bB}{\mathbf B}
\newcommand{\cA}{\mathcal A}
\newcommand{\cB}{\mathcal B}
\newcommand{\cE}{\mathcal E}
\newcommand{\cF}{\mathcal F}
\newcommand{\cG}{\mathcal G}
\newcommand{\cO}{\mathcal O}
\newcommand{\cT}{\mathcal T}
\renewcommand{\P}{\mathbb P}
\DeclareMathOperator{\Gl}{Gl}
\DeclareMathOperator{\spec}{Spec}
\newcommand{\can}{\text{\rm can}}
\newcommand{\id}{\text{\rm id}}
\newcommand{\pr}{\text{\rm pr}}
\newcommand{\inc}{\text{\rm inc}}
\newcommand{\const}{\text{\rm const}}
\newcommand{\Spec}{\text{\rm Spec}}
\newcommand{\T}{\mathcal {T}}
\newcommand{\Aff}{\mathbf {A}}
\newcommand{\Pro}{\mathbf {P}}
\newcommand \xra {\xrightarrow }
\newcommand \hra {\hookrightarrow }
\newcommand{\ttf}{{\text{f}}}
\newcommand{\E}{\mathcal E}
\renewcommand{\P}{\mathbb P}
\newcommand\mydim{\text{\rm dim}}
\renewcommand \id{\operatorname{id}}
\renewcommand \phi\varphi
\newcommand{\et}{\text{\rm\'et}}
\newcommand{\R}{{\rm R}}
\def \F{\mathcal F}
\newcommand{\ZZ}{\mathbb Z}
\begin{document}

\title{On the Grothendieck--Serre conjecture for projective smooth schemes over a DVR
}

\author{Ning Guo}
\author{Ivan Panin}
\address{St. Petersburg branch of V. A. Steklov Mathematical Institute, Fontanka 27, 191023 St. Petersburg, Russia}
\email{guo.ning@eimi.ru}
\email{paniniv@gmail.com}
\date{\today}
\subjclass[2010]{Primary 14F22; Secondary 14F20, 14G22, 16K50.}
\keywords{principal bundle, torsor, group scheme, Grothendieck--Serre}

\maketitle

\begin{abstract}
The Grothendieck--Serre conjecture predicts that every generically trivial torsor under a reductive group scheme $\bG$
over a regular local ring $R$ is trivial. The mixed characteristic case of the conjecture is widely open. We consider the following set up.
Let $A$ be a mixed characteristic DVR, $\bG$ a reductive group scheme over $A$, $X$ an irreducible smooth projective $A$-scheme,
$\cG$ a principal $\bG$-bundle over $X$. Suppose $\cG$ is generically trivial. We prove that in this case $\cG$ is Zariski locally trivial.
This result confirms the conjecture.
\end{abstract}

\section{Main results}\label{Introduction}
A survey paper \cite{P} on this
topic was published in the proceedings of ICM-2018.
Also good references for the history of the topic are
\cite{FP} and \cite{C}.
Let $R$ be a commutative unital ring. Recall that an $R$-group scheme $\bG$ is called \emph{reductive},
if it is affine and smooth as an $R$-scheme and if, moreover,
for each algebraically closed field $\Omega$ and for each ring homomorphism $R\to\Omega$ the scalar extension $\bG_\Omega$ is
a connected reductive algebraic group over $\Omega$. This definition of a reductive $R$-group scheme
coincides with~\cite[Exp.~XIX, Definition~2.7]{SGA3}.

Assume that $X$ is a regular scheme, $\bG$ is a reductive $X$-group scheme.
Recall that a $X$-scheme $\cG$ with an action of $\bG$ is called \emph{a principal $\bG$-bundle over $X$},
if $\cG$ is faithfully flat and quasi-compact over $X$ and the action is simply transitive,
that is, the natural morphism $\bG\times_X\cG\to\cG\times_X\cG$ is an isomorphism, see~\cite[Section~6]{Gr4}.
It is well known that such a bundle is trivial locally in \'etale topology but in general not in the Zariski topology.
Grothendieck and Serre conjectured that $\cG$ is trivial locally in the Zariski topology, if it is trivial generically.

More precisely,
a~well-known conjecture due to J.-P.~Serre and A.~Grothendieck
(see~\cite[Remarque, p.31]{Se}, \cite[Remarque 3, p.26-27]{Gr1}, and~\cite[Remarque~1.11.a]{Gr2})
asserts that given a regular local ring $R$ and its field of fractions~$K$ and given a reductive group scheme $\bG$ over $R$, the map
\[
  H^1_{\text{\'et}}(R,\bG)\to H^1_{\text{\'et}}(K,\bG),
\]
induced by the inclusion of $R$ into $K$, has a trivial kernel.
A {\it survey paper} \cite{P} on the topic is published in proceedings of ICM-2018. \\\\
If $R$ contains a field, then the conjecture is proved in \cite{Pan3}.
If $R$ contains {\it an infinite} field, then the conjecture is proved earlier in \cite{FP}. \\\\
The mixed characteristic case of the conjecture is widely open. Nevertheless there are several
rather general results proven in \cite{NG}, \cite{C}, \cite{Tsy}. Point out that
the main theorems of \cite{NG} and \cite{Tsy} are used in this preprint. \\\\
{\bf The main aim of this preprint is to prove Theorem \ref{MainThm1}}.\\
$A$ is a d.v.r., $m_A\subseteq A$ is its maximal ideal;\\
$K$ is the field of fractions of $A$;
$V=Spec(A)$ and $v\in V$ is its closed point;\\
$k(v)$ is the residue field $A/m_A$;\\
$p>0$ is the characteristic of the field $k(v)$; \\
it is supposed in this preprint that the field $K$ has characteritic zero;\\
For an $A$-scheme $Y$ we write $Y_v$ for the closed fibre of the structure morphism $Y\to V$.\\
For an $A$-scheme morphism $q: Y'\to Y$ write $q_v: Y'_v\to Y_v$ for the morphism $q\times_V v$;\\
All schemes in this section are $V$-schemes and we write $Y\times Y'$ for $Y\times_V Y'$. \\\\
Let $X$ be an irreducible $V$-scheme which is $V$-smooth and $V$-projective \\
of relative dimension $d\geq 1$ over $V$.  \\
Let $\mathcal K$ be the rational function field of $X$ and $\eta=Spec \mathcal \ \mathcal K$ be the generic point of $X$. \\
Let $\bG$ be a reductive group scheme over $A$. \\
{\bf Let $\cO$ be the semi-local ring of finitely many closed points of $X$}.\\
Put $U:=Spec \ \cO$.

\begin{thm}\label{MainThm1}
Let $A$, $\bG$, $X$, $\eta \in X$ be as above.
Then the kernel of the pull-back map
\[
  H^1_{\text{\'et}}(X,\bG)\to H^1_{\text{\'et}}(\eta,\bG),
\]
\noindent
consists of principal $\bG$-bundle over $X$ which are Zariski semi-locally trivial.
\end{thm}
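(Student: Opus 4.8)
The plan is to reduce Theorem~\ref{MainThm1} to the statement that every generically trivial $\bG$-torsor $\cG$ over $X$ is trivial over the semilocal ring $\cO$ of an arbitrary finite set of closed points of $X$, and then to prove this by induction on the relative dimension $d$. Two preliminary reductions are essentially formal. First, since $X\to V$ is proper, every closed point of $X$ lies in the special fibre $X_v$; in particular its residue field is finite over $k(v)$ and $\dim\cO=d+1$. Second, the restriction of $\cG$ to the local ring $\cO_{X,\xi}$ at the generic point $\xi$ of $X_v$ is a generically trivial torsor over a DVR, hence trivial by \cite{NG}, so $\cG$ is trivial on a dense open $X^{\circ}\subseteq X$ whose complement $D$ meets $X_v$ in a closed subset of dimension $\le d-1$; moreover, by the field case \cite{Pan3}, \cite{FP}, applicable since $K$ has characteristic zero, one may assume $\cG$ is trivial on all the local rings of the generic fibre $X_K$. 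Thus only the closed points of $X$ lying on $D$ remain to be treated.

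For these, the plan is a geometric presentation exploiting projectivity. After fixing a closed embedding $X\hra\P^N_V$ --- with $N$ large, via a Veronese re-embedding, to have enough room --- I would choose a generic linear projection of $\P^N_V$ onto $\P^{d-1}_V$ whose linear center misses the chosen closed points. Over the complement of this center the projection is a morphism, and restricting to the preimage of a convenient affine open $W\subseteq\bA^{d-1}_V$ one obtains an affine neighbourhood $X''$ of the closed points together with a smooth morphism $\pi\colon X''\to W$ of relative dimension one which, after further shrinking, admits a section through the images $w_j$ of the chosen points and for which $D\cap X''$ is finite over $W$. Since everything downstairs is the image of the closed points $\{x_i\}\subset X_v$, one has $w_j\in W_v\subseteq(\bA^{d-1}_V)_v$, so the $w_j$ are closed points of $\bA^{d-1}_V$, equivalently of $\P^{d-1}_V$.

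This is exactly the setting of Panin's ``nice triples'' over the regular semilocal base $\cO_W$, the semilocal ring of the $w_j$. Running the standard reduction --- the equating moves together with an elementary Nisnevich square --- replaces the triviality of $\cG$ over $X''$ by the triviality of a $\bG$-torsor over $\bA^1_{\cO_W}$ that is trivial off a divisor finite over $\Spec\cO_W$; this last problem I would settle using the relative-curve results of \cite{Tsy} together with the fact that the Grothendieck--Serre conjecture already holds for $\cO_W=\cO_{\P^{d-1}_V,\{w_j\}}$, which is the induction hypothesis applied to the smooth projective $V$-scheme $\P^{d-1}_V$ of relative dimension $d-1$. The base case is $d=0$, where $\cO$ is a semilocal Dedekind domain finite étale over $A$ and the assertion is the main theorem of \cite{NG}.

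The hard part is the geometric presentation. Because $k(v)$ may be finite or imperfect, the classical Bertini and generic-projection arguments over infinite fields are not available, so constructing the smooth fibration $\pi$ with $D\cap X''$ finite over $W$, a section through the $w_j$, and all the $w_j$ inside the special fibre requires the more robust presentation and approximation techniques over general bases. A secondary technical point is to verify that the nice-triple machinery, developed mostly over base fields, carries over to the regular semilocal rings essentially smooth over $A$ occurring here, and that the descent along the relative affine curve works in mixed characteristic; this is where the structural theorems of \cite{NG} and \cite{Tsy} are used in an essential way.
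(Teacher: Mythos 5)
The paper's proof of Theorem~\ref{MainThm1} is a short, direct two-step derivation with no induction on dimension. Theorem~\ref{MainHomotopy} (the presentation theorem cited from [GPan]) produces, for any finite set of closed points with semilocal scheme $U=\Spec\cO$, a $\bG$-bundle $\cG_t$ on $\P^1_U$ restricting to $\cG|_U$ along $\{0\}\times U$ and trivial off a closed $\cZ\subset\mathbb A^1_U$ finite over $U$ with $\cZ\cap(\{1\}\times U)=\emptyset$. Theorem~\ref{th:psv} (weak homotopy invariance) then yields triviality of $i_0^*(\cG_t)=\cG|_U$. Crucially, the base of the relative $\P^1$ is the full $(d+1)$-dimensional semilocal ring $\cO$ of the chosen points, not a lower-dimensional auxiliary ring.

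Your proposal takes a genuinely different route --- an induction on the relative dimension $d$, with [NG] as the base case, projecting $X$ to $\P^{d-1}_V$ and invoking the induction hypothesis for $\cO_W=\cO_{\P^{d-1}_V,\{w_j\}}$. Two concrete gaps make the step unconvincing. First, the roles of $U$ and $\cO_W$ are conflated: in Panin's nice-triple machinery the relative curve is $U\times_S X$ with $U=\Spec\cO$ itself, and the resulting $\bA^1$- or $\P^1$-presentation lives over $U$, not over the lower-dimensional $\cO_W$. After your projection, a $\bG$-torsor over $\bA^1_{\cO_W}$ does not naturally restrict along a section to $\cG|_\cO$; the section lands in a $d$-dimensional subscheme of $X''$, while $\cO$ is $(d+1)$-dimensional. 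It is therefore unclear what triviality over (or near the zero section of) $\bA^1_{\cO_W}$ buys for $\cG|_\cO$.

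Second, and more fundamentally, trivializing $i_0^*$ of a torsor on $\P^1_U$ (or $\bA^1_{\cO_W}$) that is trivial off a closed subset finite over the base \emph{but possibly meeting the zero section} is precisely the content of Theorem~\ref{th:psv}, the hard new step. The induction hypothesis (Grothendieck--Serre for $\cO_W$) cannot be applied directly, because $i_0^*$ of such a torsor is not known to be generically trivial --- the bad locus may pass through $\{0\}\times U$. The paper's proof of Theorem~\ref{th:psv} relies on the finite-field analogue of Gille's theorem (Theorem~\ref{th:psg}, the paper's principal new technical input, proved via Proposition~\ref{SchemeY22} and Lemma~\ref{F1F2}) and on the infinite-field analogue from [PSt], together with [Tsy]. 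Your sketch proposes to close this gap with [Tsy] and the induction hypothesis alone, which is insufficient: without a Gille-type result producing a finite \'etale $Y$ over which $\bG$ becomes quasi-split and along which the torsor can be modified, the triviality of $i_0^*$ does not follow.
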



For a $V$-scheme $W$ we denote by $\mathbb A^1_W$ the affine line over $W$ and by $\P^1_W$ the projective line over $W$.
Let $T$ be a $W$-scheme. By a principal $\bG$-bundle over $T$ we understand a principal $\bG\times_W T$-bundle.
We refer to
\cite[Exp.~XXIV, Sect.~5.3]{SGA3}
for the definitions of
a simple simply-connected group scheme over a scheme
and a semi-simple simply-connected group scheme over a scheme.

In Section~\ref{sect:outline}
we derive Theorem~\ref{MainThm1} from the following two results.


\begin{thm}({\rm{[GPan]})}
\label{MainHomotopy}
Let $A$, $\bG$, $X$, $\eta\in X$, $U$ be as in Theorem \ref{MainThm1}.
Let $\mathcal G$ be a principal $\bG$-bundle over $X$
trivial over the generic point $\eta$ of $X$.
Then there
exists a principal $\bG$-bundle $\mathcal G_t$ over $\mathbb P^1_U$
and a closed subset $\mathcal Z$ in $\mathbb A^1_U$ finite over $U$
such that
\par
(i) the $\bG$-bundle $\mathcal G_t$ is trivial over the open subscheme
$\mathbb P^1_U-\mathcal Z$ in $\mathbb P^1_U$,
\par
(ii) the restriction of $\mathcal G_t$ to $\{0\}\times U$ coincides
with the restriction of $\mathcal G$ to $U$;
\par
(iii) $\mathcal Z\cap \{1\}\times U=\emptyset$;

\end{thm}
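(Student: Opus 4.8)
The plan is to prove this by the method of \emph{nice triples} (geometric presentations), in the form developed by Panin and collaborators, adapted to a base that is a mixed--characteristic discrete valuation ring. First I would normalize the ``bad locus''. Since $\cG$ is trivial over $\eta$, the set $Y\subseteq X$ of points $x$ at which $\cG$ fails to be trivial over $\Spec\cO_{X,x}$ is a proper closed subset not containing $\eta$ (the locus where $\cG$ is Zariski--locally trivial is open, as a section over a local ring spreads out). For every codimension--$1$ point $x$ of $X$ the ring $\cO_{X,x}$ is a discrete valuation ring, so Nisnevich's theorem (the Grothendieck--Serre conjecture over DVRs) forces $x\notin Y$; hence $\operatorname{codim}_X Y\geq 2$. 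Using that $X$ is quasi--projective over $V$ and that $U$ is semilocal, I may pass to a quasi--affine open $X'\subseteq X$ containing all the chosen closed points and replace $Y$ by $Y\cap X'$; note that $Y$ may still meet $U$, which is exactly the situation the theorem is built to handle.

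Second --- and this is the technical heart --- I would construct a nice triple over $U$: a smooth affine morphism $\pi\colon\mathcal X\to U$ of relative dimension $1$, a $U$--morphism $\theta\colon\mathcal X\to X$, a section $\delta_0$ of $\pi$ with $\theta\circ\delta_0$ equal to the inclusion $U\hookrightarrow X$, and a finite surjective $U$--morphism $\sigma\colon\mathcal X\to\mathbb A^1_U$ with $\sigma\circ\delta_0=\{0\}\times U$ and $\sigma^{-1}(\{0\}\times U)=\delta_0(U)$, arranged so that the pulled--back bad locus $\mathcal D:=\theta^{-1}(Y)$ is \emph{finite over $U$}, is disjoint from $\sigma^{-1}(\{1\}\times U)$, is mapped isomorphically by $\sigma$ onto a closed subscheme $\mathcal Z\subseteq\mathbb A^1_U$ with $\sigma^{-1}(\mathcal Z)=\mathcal D$, and with $\sigma$ \'etale along $\mathcal D$. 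This is produced by a relative Noether normalization of $X'$ over $V$ together with general--position choices (a suitable pencil of hypersurface sections) that place $Y$ in good position with respect to the fibres. The $\bG$--bundle $\cG':=\theta^{*}\cG$ is then trivial over $\mathcal X\smallsetminus\mathcal D$.

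Third, I would descend to $\P^1_U$. By the previous step $(\mathcal X,\mathcal D)\to(\mathbb A^1_U,\mathcal Z)$ is an elementary distinguished (Nisnevich) square, so choosing a trivialization of $\cG'$ over $\mathcal X\smallsetminus\mathcal D$ and gluing $\cG'$ with the trivial $\bG$--bundle on $\mathbb A^1_U\smallsetminus\mathcal Z$ along $\mathcal X\smallsetminus\mathcal D$ yields a $\bG$--bundle on $\mathbb A^1_U$ which is trivial over $\mathbb A^1_U\smallsetminus\mathcal Z$ and whose restriction along $\{0\}\times U$ is $\delta_0^{*}\cG'=\cG|_U$. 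Since this bundle is trivial near the fibre at infinity, a further gluing with the trivial bundle there extends it to a $\bG$--bundle $\cG_t$ on $\P^1_U$. Property (i) is then built in, (ii) holds because $\cG_t|_{\{0\}\times U}=\cG|_U$, and (iii) holds because $\mathcal Z=\sigma(\mathcal D)$ misses $\{1\}\times U$ by construction. Here one also checks (as needed for MainThm1) that $\mathcal Z\cap(\{1\}\times U)=\emptyset$ makes $\cG_t$ trivial along the $1$--section.

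The main obstacle is Step 2. The equicharacteristic analogue of this presentation rests on classical Bertini--type general--position theorems, which fail when the residue field $k(v)$ is finite; over a DVR one must instead invoke the mixed--characteristic refinements of Gabber's geometric presentation lemma --- and, when $k(v)$ is finite, Poonen--style Bertini theorems over finite fields, or a passage through a carefully chosen finite \'etale cover --- to force $Y$ into the required position. Making all the ``niceness'' conditions ($\sigma$ finite, $\sigma$ \'etale along $\mathcal D$, $\sigma^{-1}(\mathcal Z)=\mathcal D$, the $0$-- and $1$--fibre conditions) hold simultaneously over such a base is the delicate point, and is exactly the content of [GPan].
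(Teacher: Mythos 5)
First, a point of comparison: the paper does not actually prove Theorem~\ref{MainHomotopy} in this text at all --- it is quoted verbatim from the companion preprint \cite{GPan}, and only the equicharacteristic analogue of the construction (nice triples, the finite surjective $\sigma:\mathcal X'\to\mathbb{A}^1_U$, the Colliot-Th\'el\`ene--Ojanguren patching) survives in the appendix fragments. Your outline is the right strategy in spirit and matches that framework, but it cannot be accepted as a proof, because your Step~2 --- the existence of the mixed-characteristic presentation with all conditions holding simultaneously --- is exactly the content that is deferred to \cite{GPan}, and you defer it too. In addition, two of the conditions you impose on the presentation are stronger than what can actually be arranged: one cannot demand $\sigma^{-1}(\{0\}\times U)=\delta_0(U)$ (since $\deg\sigma>1$ one only gets $\sigma^{-1}(\{0\}\times U)=\Delta(U)\sqcup\mathcal D_0$ with $\mathcal D_0$ disjoint from the bad locus), nor $\sigma^{-1}(\mathcal Z)=\mathcal D$ on the nose (one gets $\sigma^{-1}(\sigma(\mathcal D))=\mathcal D\sqcup\mathcal D''$ and must localize away from $\mathcal D''$ by an element $g$ before the square becomes an elementary Nisnevich square). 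These weaker statements still suffice for the gluing, but the argument must be run with them.

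The more substantive gap is in your normalization step. You take $Y$ to be the locus where $\mathcal G$ fails to be \emph{Zariski-locally} trivial, observe via Nisnevich's theorem that $\operatorname{codim}_XY\ge 2$, set $\mathcal D=\theta^{-1}(Y)$, and then assert that $\theta^*\mathcal G$ is \emph{trivial} over $\mathcal X\smallsetminus\mathcal D$. That implication is false: Zariski-local triviality of $\mathcal G$ on $X\smallsetminus Y$ gives no global trivialization of $\theta^*\mathcal G$ on $\mathcal X\smallsetminus\mathcal D$, and without an actual trivialization there is nothing to glue against the trivial bundle on $\mathbb{A}^1_U\smallsetminus\mathcal Z$. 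The correct normalization (the one used throughout the Panin--Fedorov--PSV framework and in the appendix here) is to spread out the generic trivialization: choose $0\ne \mathrm{f}$ with $\mathcal G|_{X_{\mathrm f}}$ trivial, take the bad locus to be the divisor $\{\mathrm f=0\}$ (after shrinking so that it is finite over the relevant base), and build the nice triple so that $\{f'=0\}=\theta^*\{\mathrm f=0\}$ is finite over $U$. The codimension-$2$ discussion is then irrelevant. With that correction, and granting the presentation theorem of \cite{GPan} for the existence of the triple over a mixed-characteristic DVR (including the finite-residue-field case, where Poonen-type Bertini theorems replace the classical ones), the rest of your gluing argument and the verification of (i)--(iii) go through; note also that since $\mathbf G$ is defined over $A$ itself, the ``equating group schemes'' step of the equicharacteristic proof is not needed here, so your omission of it is harmless.
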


\begin{thm}{\rm{(Weak homotopy invariance)}}
\label{th:psv}
Let
$U$ be as in Theorem~\ref{MainThm1}.
This time let $\bG$ be a reductive $U$-group scheme.
Let $\mathcal Z\subset\mathbb A^1_U$ be a closed subscheme finite over $U$ and such that $\mathcal Z\cap \{1\}\times U=\emptyset$.
Let $\mathcal G$ be a~principal $\bG$-bundle over
$\mathbb P^1_U$ such that its restriction to
$\mathbb P^1_U - \mathcal Z$ is trivial.
Then for the zero section $i_0: U\to \mathbb P^1_U$ of the projection $pr_U: \mathbb P^1_U\to U$
the $\bG$-bundle $i^*_0(\cG)$ over $U$ is trivial.
\end{thm}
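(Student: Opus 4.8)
The plan is to reduce to the known case of a Zariski-trivial family over $\mathbb{P}^1_U$ using a ``moving the divisor'' argument, and then invoke the weak homotopy invariance result for $\mathbb{A}^1$ over the semi-local ring $\cO$. First I would set up the geometry: since $\cZ\subset\mathbb{A}^1_U$ is finite over the semi-local ring $\cO$, and $\cO$ is semi-local of dimension $\le 2$, the scheme $\cZ$ is itself semi-local; its complement $\mathbb{A}^1_U\smallsetminus\cZ$ contains the section $\{1\}\times U$ by hypothesis. The idea is that the data $(\mathbb{P}^1_U,\cZ,\cG)$ should be brought into a form where a section of $\cG$ over $\mathbb{P}^1_U\smallsetminus\cZ$ can be extended across $\cZ$ near the zero section, or equivalently where one can apply an $\mathbb{A}^1$-invariance statement for $\bG$-bundles over $U$.

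The key steps, in order: (1) Use that $\cG|_{\mathbb{P}^1_U\smallsetminus\cZ}$ is trivial, so $\cG$ is determined by its restriction to a formal/Nisnevich neighborhood of $\cZ$ together with a gluing datum, i.e.\ $\cG$ corresponds to a class that is ``supported near $\cZ$''. Since $\cZ$ is finite over $U$ and disjoint from $\{1\}\times U$, after an affine change of coordinates on $\mathbb{P}^1$ fixing $\infty$ we may assume $\cZ\subset\mathbb{A}^1_U$ stays away from both $\{0\}\times U$ and $\infty\times U$; but $\{0\}\times U$ may meet $\cZ$, so instead I would not move $\cZ$ off the zero section but rather (2) apply a patching/Horrocks-type argument on $\mathbb{P}^1_U$: a $\bG$-bundle on $\mathbb{P}^1_U$ trivial on $\mathbb{P}^1_U\smallsetminus\cZ$ and on the formal neighborhood of $\infty_U$ is ``extended from $U$'' — this is exactly the content one expects from the affine Horrocks theorem for reductive group schemes over a semi-local base, proved in the references \cite{NG} (and for the simply-connected case via \cite{Tsy}) that are cited as inputs. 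Concretely, one shows $i_0^*(\cG)$ and $i_\infty^*(\cG)$ are isomorphic, and $i_\infty^*(\cG)$ is trivial because $\cZ$ can be separated from $\infty$; hence $i_0^*(\cG)$ is trivial. (3) To make step (2) rigorous one needs $\cG$ to be trivial on the formal neighborhood of $\{\infty\}\times U$, which follows since $\cZ\subset\mathbb{A}^1_U$ avoids $\infty_U$, so $\infty_U\subset\mathbb{P}^1_U\smallsetminus\cZ$, and triviality on $\mathbb{P}^1_U\smallsetminus\cZ$ restricts to triviality there.

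The main obstacle is step (2): the Horrocks-type / weak homotopy invariance statement for principal $\bG$-bundles over $\mathbb{P}^1_U$ with $U$ the spectrum of a two-dimensional semi-local ring over a DVR. Over a field this is classical (Raghunathan, and for semi-local regular rings containing a field it is in \cite{FP}, \cite{Pan3}); in mixed characteristic one must invoke the corresponding results of \cite{NG} and \cite{Tsy}. The delicate point is that $\cO$ is the semi-local ring of several closed points of $X$, hence need not be a localization of a smooth algebra over the DVR $A$ in an obvious way, and the residue characteristic $p>0$ obstructs naive deformation arguments; one has to check that the cited mixed-characteristic results apply to this $\cO$, and that the reduction of a bundle on $\mathbb{P}^1_U$ trivial away from a finite closed subset $\cZ$ disjoint from a section genuinely produces a trivialization over $U$ via the section $i_0$. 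I would expect the bulk of the work to be verifying the hypotheses of the black-box theorems from \cite{NG}, \cite{Tsy} in this geometric situation, plus the elementary but slightly fussy coordinate bookkeeping needed to guarantee $\cZ$ stays disjoint from $\{\infty\}\times U$ after passing to $\mathbb{P}^1$.
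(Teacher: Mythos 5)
Your proposal identifies the right shape of the conclusion (show $\cG$ is extended from $U$ along $pr_U$ and then evaluate at the section $\{1\}\times U$ which avoids $\cZ$), but the middle step is essentially circular: the ``Horrocks-type / affine-Horrocks for reductive group schemes over a semi-local base'' statement you want to invoke in step (2) is not available as a black box and is in fact the hard part of the theorem you are trying to prove. In particular, the main result of \cite{Tsy} that the paper actually uses does \emph{not} say that a $\bG$-bundle on $\P^1_U$ trivial on $\P^1_U\smallsetminus\cZ$ (with $\cZ$ finite over $U$ and disjoint from $\infty_U$) is extended from $U$. What it says is that a $\bG$-bundle on $\P^1_U$ whose restriction to the closed fibre $\P^1_u$ is \emph{trivial} is extended from $U$. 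Establishing that closed-fibre triviality (or rather, achieving it after a suitable modification of $\cG$) is precisely where the work lies, and your proposal does not touch it. Likewise \cite{NG} (Grothendieck--Serre over semi-local Dedekind rings) does not supply the statement you invoke.

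The actual proof has two layers that your sketch skips entirely. First, there is a reduction from reductive $\bG$ to the semi-simple case: one passes to the radical $\bT=\Rad(\bG)$ and coradical $\mathbf{S}=\bG/\bG^{der}$, twists $\cG$ by a Zariski semi-locally trivial $\bT$-bundle and pulls back along $t\mapsto t^d$ to kill the class $can_*([\cG])$ in $H^1_{\text{\'et}}(\P^1_U,\mathbf S)$, and then works with a class lifted to $\bG^{der}$. Second, and more importantly, the semi-simple case is handled by Theorem \ref{th:psg} (the ``finite field version of Gille's theorem''), whose proof requires: (i) constructing, via Poonen's Bertini theorem over finite fields, a closed $Y\subset\mathbb A^1_U$ finite and \'etale over $U$ with $\bG_Y$ quasi-split, $Y\cap\cZ=\emptyset$, $Y$ missing $\{0\}\times U$ and $\{1\}\times U$, and $\Pic(\P^1_u-Y_u)=0$ (Proposition \ref{SchemeY22}); (ii) applying Gille's theorem \cite{GilleTorseurs} on each closed fibre $\P^1_u$ to produce a trivializing modification $\alpha_u$ along $Y_u$; and (iii) lifting $\alpha_u$ to $\alpha\in\bG^{sc}(\dot Y^h)$ using that $\bG^{sc}_u(\dot Y^h_u)=\bE\bG^{sc}_u(\dot Y^h_u)$ (simple connectedness) and that $\bE\bG^{sc}(\dot Y^h)\to\bE\bG^{sc}_u(\dot Y^h_u)$ is surjective because $\dot Y^h$ is affine. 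Only after this modification is the bundle trivial on the closed fibre, and only then does \cite{Tsy} apply. None of this is visible in your sketch, and the statement you are treating as a known lemma is strictly stronger than what the literature provides.

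Two smaller points: $\cO$ is the semi-local ring of closed points on a scheme of dimension $d+1\ge 2$, so its dimension is not bounded by $2$ in general; and while you correctly observe that $i_\infty^*(\cG)$ and $i_1^*(\cG)$ are trivial because $\cZ\subset\mathbb A^1_U$ avoids both $\infty_U$ and $\{1\}\times U$, the real work is precisely in getting from there to $i_0^*(\cG)$ trivial without assuming what you are trying to prove.
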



{\bf Clearly, Theorem \ref{MainThm1} is a consequence of Theorems \ref{MainHomotopy} and \ref{th:psv}}.
Theorem \ref{MainHomotopy} is one of the main result of the preprint
\cite{GPan}. Theorem \ref{th:psv} is easily derived in the present preprint
from
two recent results.
{\bf Namely, from
\cite[Thm.~Main]{PSt}
and
Theorem \ref{th:psg}.
}
The latter results are
stated right below.
Theorem \ref{th:psg} is proven in this preprint.

\begin{thm}{\rm{(A finite field version of the Gille Theorem).}}
\label{th:psg}
Suppose $k(v)$ is a {\bf finite} field.
Let
$U$ be as in Theorem~\ref{MainThm1}.
This time let $\bG$ be a semi-simple $U$-group scheme.
Let $\mathcal Z\subset\mathbb A^1_U$ be a closed subscheme finite over $U$ and such that $\mathcal Z\cap \{1\}\times U=\emptyset$.
Let $\mathcal G$ be a~principal $\bG$-bundle over
$\mathbb P^1_U$ such that its restriction to
$\mathbb P^1_U - \mathcal Z$ is trivial.
Then there exists a closed subscheme $Y\subset \mathbb P^1_U$ finite and etale over $U$ such that
$\mathcal Z\cap Y=\emptyset$ and $\mathcal G$ is trivial over $\P^1_U-Y$.
\end{thm}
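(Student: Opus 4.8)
The plan is to reduce the statement to the classical theorem of Gille on the triviality of torsors on the affine line over semilocal regular rings, by a careful choice of a divisor $Y$ on $\P^1_U$ that separates the "bad locus" $\cZ$ from a rational point. First I would recall the structure of the situation: $\cG$ is a $\bG$-torsor on $\P^1_U$ trivial on $\P^1_U-\cZ$, where $\cZ$ is finite over the semilocal scheme $U=\Spec\cO$ with $\cO$ regular of dimension $d+1$ (the closed fiber being over a finite field $k(v)$). Since $\cZ$ is finite over $U$ and $\cZ\cap(\{1\}\times U)=\varnothing$, $\cZ$ is contained in an affine open of $\P^1_U$ and in fact in $\mathbb A^1_U$ with coordinate chosen so that $1$ is not in the image. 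I want to produce a finite étale $Y\subset\P^1_U$ over $U$ with $Y\cap\cZ=\varnothing$ and $\cG$ trivial on $\P^1_U-Y$.

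The key step is a Bertini-type / general-position argument on the closed fiber. Over the finite residue field $k(v)$ one cannot naively invoke the usual Bertini theorems that require the ground field to be infinite; this is exactly where the hypothesis "$k(v)$ finite" forces extra care, and it is the main obstacle. The standard device is to use the Poonen high-degree Bertini theorem over finite fields, or alternatively to pass to a finite extension and use a norm/transfer (corestriction) argument together with a coprimality trick to descend: one finds two finite étale multisections $Y_1,Y_2$ of $\P^1_U\to U$, of coprime degrees, each disjoint from $\cZ$, and each constructed so that $\cG$ becomes trivial after removing it — then combines them. Concretely, after a finite étale base change $\cO\to\cO'$ with infinite residue fields one applies Gille's theorem (torsors under reductive groups on $\mathbb A^1$ over a semilocal regular domain are extended from the base, hence trivial here since the restriction to a section is trivial), obtaining triviality on $\P^1_{U'}$ minus a section; a transfer argument then pushes this down to $U$ at the cost of introducing a $Y$ of controlled (coprime) degree.

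In more detail, I would proceed as follows. (1) Choose a coordinate on $\P^1$ so that $\cZ\subset\mathbb A^1_U=\P^1_U-\{\infty\}$ and $\{1\}\cap\cZ=\varnothing$; set $D_\infty=\{\infty\}\times U$, a section disjoint from $\cZ$. (2) Observe that $\cG$ is already trivial on $\P^1_U-\cZ\supset\mathbb A^1_U-\cZ$ and in particular on a neighborhood of $D_\infty$, so the obstruction to finding the desired $Y$ is purely about moving or replacing $\cZ$. (3) Using the finiteness of $\cZ$ over $U$ and the explicit geometry of $\P^1$, construct via Poonen's Bertini theorem (applied fiberwise over the finite field, then spread out over $U$ using that $\cO$ is semilocal) a hypersurface section, i.e. a closed subscheme $Y\subset\P^1_U$ which is finite étale over $U$, disjoint from $\cZ$, and such that $\P^1_U-Y$ is affine over $U$. (4) On the affine $U$-scheme $\P^1_U-Y\cong\Spec$ of a semilocal ring over $\cO$ — more precisely, an open in $\mathbb A^1_\cO$ — invoke Gille's theorem (the reductive/semi-simple $\mathbb A^1$-invariance over regular semilocal bases): since $\cG$ restricted to $\P^1_U-Y$ is generically trivial and trivial along the section through $\infty$ (which lies in $\P^1_U-Y$ once $Y$ is chosen to avoid $\infty$ as well), it is trivial. (5) If the finite-field Bertini step only yields such $Y$ after a degree-$n$ étale extension, repeat with a coprime degree $n'$ and splice the two trivializations; a chase of cocycles over $\P^1_U-(Y_1\cup Y_2)$, which is now an open in $\mathbb A^1_\cO$ with $\cG$ trivial, lets one conclude with $Y=Y_1$ (or a single one) after adjusting. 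The heart of the difficulty, and the step I expect to absorb most of the work, is verifying that Gille's $\mathbb A^1$-invariance statement applies to the non-closed open subscheme $\P^1_U-Y$ of $\mathbb A^1_\cO$ in mixed characteristic and that one can always arrange $Y$ finite étale (not merely finite) over $U$ while disjoint from $\cZ$ over a finite residue field.
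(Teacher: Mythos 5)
Your proposal would not work as written; it is missing several of the mechanisms that the paper actually relies on, and it misapplies Gille's theorem.

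The central gap is where you invoke Gille's $\mathbb{A}^1$-invariance ``over a semilocal regular domain'' to trivialize $\cG$ on $\P^1_U - Y$. Gille's result \cite{GilleTorseurs} is a theorem over a \emph{field}; it controls torsors on $\P^1_u$ (or $\mathbb{A}^1_u$) over the finite residue field $k(u)$, not torsors on $\P^1_U$ over the mixed-characteristic semilocal ring $\cO$. The paper therefore only applies Gille on the closed fiber $\P^1_u$, and then needs an entirely separate mechanism — the main result of Tsybyshev \cite{Tsy} — to pass from triviality on the closed fiber to the statement that the (modified) bundle is extended from $U$, after which triviality at $\{1\}\times U$ finishes the argument. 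Your step (4) silently treats Gille as if it held over $\cO$ itself, which is precisely what is not available.

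A second missing idea: your proposal never uses the simply connected cover $\Pi\colon \bG^{sc}\to\bG$, yet the entire paper's argument is built around it. The paper lifts $\cG$ to a $\bG^{sc}$-bundle $\cG^{sc}$ normalized to be trivial along $\{1\}\times U$, shows $\cG^{sc}|_Y$ is trivial by observing that its class comes from the central kernel $\mu_Y$ sitting inside a \emph{quasi-split} maximal torus $T\subset\bG^{sc}_Y$ (with $H^1_{et}(Y,T)=*$), and then modifies $\cG^{sc}$ along the Henselization $Y^h$ using a section $\alpha$ of $\bG^{sc}(\dot Y^h)$. The existence of $\alpha$ lifting a fiberwise correction $\alpha_u$ depends on the surjectivity of $\bE(\bG^{sc}(\dot Y^h))\to \bG^{sc}_u(\dot Y^h_u)$, which again uses simple connectedness (Kneser–Tits/elementary subgroup theory on the affine scheme $\dot Y^h$). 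None of this patching/modification structure appears in your sketch; you only speak of ``splicing trivializations'' in vague terms.

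Finally, the role of $Y$ in your proposal is to make $\P^1_U - Y$ affine, whereas in the paper $Y$ (from Proposition~\ref{SchemeY22}) is built so that (i) $\bG_Y$ is quasi-split and (ii) $\mathrm{Pic}(\P^1_u - Y_u)=0$ for each closed $u$. The first property is what makes $\cG^{sc}|_Y$ trivial; the second is what lets Gille produce a trivialization of $\cG^{sc}_u$ on $\P^1_u - Y_u$. Your finite-field Bertini input and coprime-degree trick are indeed cousins of what goes into Lemma~\ref{F1F2} (Poonen's Bertini theorem does underlie Lemma~\ref{OjPan}), but you put them to the wrong use: they are needed to secure the Picard vanishing and the quasi-splitness, not to produce an affine complement on which to apply an $\mathbb{A}^1$-invariance theorem that in any case does not hold over $\cO$.
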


\begin{thm}{\rm\cite[Main Thm.]{PSt}}{\rm{(An infinite field version of the Gille Theorem).}}
\label{th:ps}
Suppose $k(v)$ is an {\bf infinite} field.
Let
$U$ be as in Theorem~\ref{MainThm1}.
This time let $\bG$ be a semi-simple $U$-group scheme.
Let $\mathcal Z\subset\mathbb A^1_U$ be a closed subscheme finite over $U$ and such that $\mathcal Z\cap \{1\}\times U=\emptyset$.
Let $\mathcal G$ be a~principal $\bG$-bundle over
$\mathbb P^1_U$ such that its restriction to
$\mathbb P^1_U - \mathcal Z$ is trivial.
Then there exists a closed subscheme $Y\subset \mathbb P^1_U$ finite and etale over $U$ such that
$\mathcal Z\cap Y=\emptyset$ and $\mathcal G$ is trivial over $\mathbb P^1_U-Y$.
\end{thm}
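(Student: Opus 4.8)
The plan is to analyse $\mathcal G$ first over the generic point of $U$, then to spread that analysis out and glue it to the given generic triviality, producing the finite \'etale locus $Y$ by a Bertini-type argument that relies on the residue fields of $\mathcal O$ being infinite.

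Write $\mathcal K$ for the fraction field of $\mathcal O$, i.e.\ the function field of $X$; it has characteristic $0$. Since $\mathcal Z\cap(\{1\}\times U)=\emptyset$, the torsor $\mathcal G_{\mathcal K}$ over $\P^1_{\mathcal K}$ is trivial at the rational point $1$; passing to the $\mathbb A^1$-chart of $\P^1_{\mathcal K}$ with origin at $1$ and invoking homotopy invariance of $H^1_{\et}(-,\bG)$ for the affine line over a field (Raghunathan--Ramanathan, available here because $\bG_{\mathcal K}$ is reductive over a field of characteristic $0$), one sees that $\mathcal G_{\mathcal K}$ restricted to that chart is extended from its fibre over $1$, hence trivial. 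So $\mathcal G_{\mathcal K}$ is trivial on $\mathbb A^1_{\mathcal K}=\P^1_{\mathcal K}\setminus\{\infty\}$; by Harder's classification of reductive-group torsors on the projective line over a field, a torsor trivial on one affine chart is the ``standard'' torsor $\mathcal L_\lambda$ attached to a cocharacter $\lambda$ of a maximal $\mathcal K$-torus $\bT\subseteq\bG_{\mathcal K}$, obtained by gluing the trivial torsors on $\P^1_{\mathcal K}\setminus\{\infty\}$ and $\P^1_{\mathcal K}\setminus\{0\}$ via $\lambda(1/t)\in\bG(\mathbb G_{m,\mathcal K})$; in particular $\mathcal G_{\mathcal K}$ is also trivial on $\P^1_{\mathcal K}\setminus\{0\}$. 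Spreading $\bT$, $\lambda$ and this gluing out over a dense affine open $U'=\Spec\mathcal O_f\subseteq U$ ($0\neq f\in\mathcal O$) shows that $\mathcal G|_{\P^1_{U'}}$ is trivial both on $\P^1_{U'}\setminus(\{0\}\times U')$ and on $\P^1_{U'}\setminus(\{\infty\}\times U')$. Together with the hypothesis that $\mathcal G$ is trivial on $\P^1_U\setminus\mathcal Z$, the only locus where a trivialisation is still missing is $\mathcal Z\cap\mathbb A^1_{V(f)}$, which is finite over the proper closed set $V(f)\subsetneq U$ and so has dimension $<\dim U$.

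The geometric heart of the proof is to turn this residual locus into a closed subscheme $Y\subset\P^1_U$ that is finite \emph{\'etale} over $U$ and disjoint from $\mathcal Z$, and to patch the two trivialisations just obtained into a single one over $\P^1_U\setminus Y$. Because $\mathcal Z$ is finite over $U$ it is contained in $V(h)$ for some monic $h\in\mathcal O[t]$; using that every residue field of $\mathcal O$ is infinite, one chooses the coordinate $t$ (and, if necessary, shrinks or enlarges $f$) so that, after replacing $h$ by $h-\varepsilon$ for a generic $\varepsilon\in\mathcal O$, the divisor $Y_0:=V(h-\varepsilon)$ is finite \'etale over $U$ and disjoint from $\mathcal Z$; one then checks, by a Beauville--Laszlo / Nisnevich-excision patching along $Y_0$ (and along $\{\infty\}\times U$), that the trivialisation away from $\mathcal Z$ and the standard description of $\mathcal G$ over $\P^1_{U'}$ glue to a trivialisation of $\mathcal G$ over $\P^1_U\setminus Y$ for a suitable finite \'etale $Y$ assembled from components of $Y_0$ and $\{\infty\}\times U$, still disjoint from $\mathcal Z$. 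The reduction from an arbitrary semisimple $\bG$ to the simply connected case, should it be needed for the transition-function bookkeeping, is made via the central isogeny $\widetilde{\bG}\to\bG$ together with the triviality over $\P^1_U\setminus\mathcal Z$ of the resulting $H^2$-obstruction with supports.

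The step I expect to be the main obstacle is this last patching: merging the generic ``standard'' trivialisation over $\P^1_{U'}$ with the given one over $\P^1_U\setminus\mathcal Z$ into a trivialisation defined on the complement of a single \emph{horizontal}, finite-\'etale divisor $Y$ over the \emph{whole} semilocal $U$. The difficulty is exactly that over a base of dimension $\geq 2$ (unlike over a field or a Dedekind ring) $\bG$-torsors on $\mathbb A^1_U$ need not be extended from $U$, so one cannot simply take $Y=\{\infty\}\times U$; the infiniteness of the residue fields is what supplies the required generic-position choices, and keeping control of the loop-group / affine-Grassmannian valued transition data over $\mathcal O$ rather than over $\mathcal K$ is the technical crux.
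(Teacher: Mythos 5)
First, a point of order: the paper does not prove Theorem~\ref{th:ps} at all --- it is quoted verbatim from \cite[Main Thm.]{PSt}, and Remark~\ref{ps_and_psg} explicitly says its proof ``requires an additional new method''. The closest thing to a proof inside this paper is the argument for the finite-field analogue, Theorem~\ref{th:psg}, in Section~\ref{semi-simple}, so that is what your proposal has to be measured against.

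Measured that way, your proposal has a genuine gap, and it is exactly the step you yourself flag as ``the main obstacle''. Your first two paragraphs (triviality of $\mathcal G_{\mathcal K}$ on $\mathbb A^1_{\mathcal K}$ via homotopy invariance, the Harder--Gille normal form over the generic point, spreading out to $U'=\Spec\mathcal O_f$) are fine but only control the bundle over a dense open $U'\subseteq U$; since $U$ is semilocal of dimension $d+1\ge 2$, the bad locus $V(f)$ can contain every closed point of $U$, so this analysis says nothing at the points where the statement is actually hard. Your last step then has to produce a finite \'etale $Y$ with $Y\cap\mathcal Z=\emptyset$ and $\mathcal G$ trivial on $\P^1_U-Y$; note that this open set \emph{contains all of} $\mathcal Z$, so the conclusion is strictly stronger than the hypothesis, and no amount of Beauville--Laszlo gluing of the two trivializations you already have (on $\P^1_U-\mathcal Z$ and on the charts of $\P^1_{U'}$) can yield it: gluing trivializations over an open cover only re-proves Zariski-local triviality on the union of the cover, which misses $\mathcal Z\cap\P^1_{V(f)}$. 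The Bertini perturbation $h-\varepsilon$ does give a finite \'etale $Y_0$ disjoint from $\mathcal Z$, but you give no reason why $\mathcal G$ should be trivial on $\P^1_U-Y_0$, and for a generic such $Y_0$ it will not be.

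What the actual argument does (for Theorem~\ref{th:psg}, and in modified form in \cite{PSt}) is work with the \emph{closed} fibres rather than the generic one: (1) choose $Y$ not generically but so that $\bG_Y$ acquires enough unipotent elements --- in the finite-field case Proposition~\ref{SchemeY22} makes $\bG_Y$ quasi-split and forces $\Pic(\P^1_u-Y_u)=0$; in the infinite-field case this proposition fails and \cite{PSt} replaces it by a different construction, which is the ``new method''; (2) present $\mathcal G$ as glued from $\mathcal G|_{\P^1_U-Y}$ along the henselization $\dot Y^h$ via a cocycle $\phi$; (3) use Gille's theorem on each closed fibre $\P^1_u$ to find $\alpha_u$ making the modified bundle trivial on $\P^1_u$, and lift $\alpha_u$ to $\alpha\in\bG(\dot Y^h)$ via surjectivity of the elementary subgroup $\bE\bG(\dot Y^h)\to\bE\bG(\dot Y^h_u)$ (this is where isotropy/quasi-splitness over $Y$ is indispensable); (4) apply Tsybyshev's theorem \cite{Tsy} to conclude that the modified bundle, being trivial on the closed fibres and on the complement of a finite subscheme, is extended from $U$ and hence trivial, whence $\mathcal G|_{\P^1_U-Y}$ is trivial. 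None of steps (1)--(4) appears in your proposal, and without them the statement is not proved.
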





\begin{rem}
\label{ps_and_psg}
Surely, Theorem \ref{th:psg} and \ref{th:ps} are {\bf quite similar} to each the other.
However the proof of Theorem \ref{th:ps} requires
{\bf an additional new method}.
\end{rem}

To state Proposition \ref{SchemeY22} recall the following notion.
Let $\bH$ be a semi-simple group scheme over a semi-local scheme $W$.
Recall that $\bH$ is called quasi-split, if
the $W$-group scheme $\bH$ contains a Borel group subscheme.

\begin{prop}
\label{SchemeY22}
Suppose the residue field $k(v)$ is finite. Let $U$ be as in Theorem~\ref{th:psv} and
$\mathcal Z\subset\mathbb A^1_U$ be an arbitrary closed subset finite over $U$.
This time suppose $\bG$ is a semi-simple $U$-group scheme.
Then there is a closed subscheme $Y\subset\mathbb A^1_U$ which is \'etale and finite over $U$ and such that \\
(i) the $Y$-group scheme $\bG_Y:=\bG\times_U Y$ is quasi-split;\\
(ii) $Y\cap \mathcal Z=\emptyset$, $Y\cap (\{0\}\times U)=\emptyset$, $Y\cap (\{1\}\times U)=\emptyset$; \\
(iii) for any closed point $u \in U$ one has $Pic(\P^1_u - Y_u)=0$, where $Y_u:=\P^1_u\cap Y$;\\
(Note that $Y$ and $\mathcal Z$ are closed in $\P^1_U$ since they are finite over $U$).
\end{prop}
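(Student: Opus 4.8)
\emph{Plan of proof.} We may assume $\bG$ has positive-dimensional fibres, the contrary case being trivial (take a constant section $t=c$ with $c\in\cO$ generic). The plan is to work with the scheme $\mathrm{Bor}(\bG)$ of Borel subgroups of $\bG$: it is smooth and projective over $U$ with geometrically integral fibres of constant relative dimension $r\ge 1$ (\cite[Exp.~XXII]{SGA3}), and its formation commutes with base change. Since a $U$-morphism $Y\to\mathrm{Bor}(\bG)$ is the same datum as a Borel subgroup scheme of $\bG\times_U Y$, to secure (i) it suffices to exhibit a closed subscheme $Y\subset\mathbb A^1_U$, finite and \'etale over $U$, admitting such a morphism; (ii) and (iii) will be built into the construction. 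I would proceed in two steps: first produce an \emph{abstract} finite \'etale $U$-scheme $Y_0$ with a $U$-morphism $Y_0\to\mathrm{Bor}(\bG)$ whose closed fibres have a controlled degree profile, and then re-embed $Y_0$ into $\mathbb A^1_U$ in the required position.

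For the first step I would fix a relatively very ample $\cL$ on $\mathrm{Bor}(\bG)$, a large integer $D$, and two primes $p_1<p_2\le D$ close to $D$; for each of the finitely many closed points $u\in U$, Lang--Weil on the positive-dimensional geometrically integral $k(u)$-variety $\mathrm{Bor}(\bG)_u$ provides closed points $z_u,z_u'$ of degrees $p_1,p_2$. I would then cut $\mathrm{Bor}(\bG)$ by $r$ members $H_1,\dots,H_r$ of $|\cL^{\otimes e}|$, $e\gg 0$, built as follows: over each finite field $k(u)$ apply Poonen's Bertini theorem (in the version with finitely many local conditions) to obtain, inductively, hypersurfaces whose successive intersections with $\mathrm{Bor}(\bG)_u$ are smooth over $k(u)$, contain $z_u$ and $z_u'$ smoothly, and miss every other closed point of degree $\le D$ of $\mathrm{Bor}(\bG)_u$ and of the intermediate curves; then lift this finite family to $H_1,\dots,H_r\in|\cL^{\otimes e}|$ by the Chinese remainder theorem (for $e\gg 0$ the restriction onto $\prod_u H^0(\mathrm{Bor}(\bG)_u,\cL^{\otimes e}|_u)$ is surjective), using the remaining freedom in the $H_j$ (the lifts form a Zariski-dense coset, and the generic fibre is over a field) to keep $Y_0:=\mathrm{Bor}(\bG)\cap H_1\cap\dots\cap H_r$ of relative dimension $0$; it is automatically nonempty. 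Then $Y_0$ is a complete intersection in the $U$-smooth $\mathrm{Bor}(\bG)$, hence Cohen--Macaulay, and being finite over the regular $U$ with $0$-dimensional fibres it is finite flat (miracle flatness); since each $(Y_0)_u$ is smooth of dimension $0$ over $k(u)$, the finite $\cO$-module $\Omega_{Y_0/U}$ vanishes modulo every maximal ideal of $\cO$, hence vanishes, so $Y_0\to U$ is \'etale. By construction each $(Y_0)_u$ is a disjoint union of spectra of finite fields, all of degree $\ge D$ over $k(u)$ except $z_u$ (degree $p_1$) and $z_u'$ (degree $p_2$).

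For the second step, set $B:=\Gamma(Y_0,\cO_{Y_0})$ and fix a monic $h\in\cO[t]$ with $\mathcal Z\subseteq V(h)$ (such $h$ exists as $\mathcal Z$ is finite over $U$). For each $u$ and each $d$, the number of degree-$d$ factors of $B\otimes_\cO k(u)$ is at most $\#(Y_0)_u$, which is polynomial in $D$, whereas the number $N_u(d)$ of monic irreducibles of degree $d$ over $k(u)$ grows like $|k(u)|^d/d$; taking $D$ large (and $p_1$ close to $D$, and $e$ as forced by Bertini) then gives $N_u(d)>\#(Y_0)_u+\deg h$ for $d\in\{p_1,p_2\}$ and for all $d>D$. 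In particular every $B\otimes k(u)$ is monogenic over $k(u)$, so by Nakayama over the semi-local $\cO$ the algebra $B$ is monogenic over $\cO$; moreover one can pick a generator $b\in B$ by choosing, compatibly under $B\twoheadrightarrow\prod_u(B\otimes k(u))$, a primitive element $\bar b_u$ of each $B\otimes k(u)$ whose associated closed points of $\mathbb A^1_{k(u)}$ — all of degree $\ge 2$ — avoid the closed points of $V(h)_u$ of the relevant degrees (there is room, by the count above). Letting $Y$ be the image of the resulting closed immersion $Y_0\hookrightarrow\mathbb A^1_U$, we obtain: $Y\cong Y_0$ is finite \'etale over $U$ and maps to $\mathrm{Bor}(\bG)$, which is (i); $Y\cap\mathcal Z$, $Y\cap(\{0\}\times U)$ and $Y\cap(\{1\}\times U)$ are finite over $U$ and empty over every closed point of $U$ (the first by the choice of the $\bar b_u$, the last two because $Y_u$ has no rational point), hence empty, which is (ii); and $\Pic(\P^1_u-Y_u)\cong\Z/\gcd\{\deg P:P\in Y_u\}$ with the gcd dividing $\gcd(p_1,p_2)=1$, which is (iii).

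The hard part will be the Bertini argument in the first step: one needs hypersurface sections smoothing \emph{simultaneously} all the closed fibres — which sit over finite fields, so that Poonen's circle of ideas, not classical Bertini, is the relevant tool — while also realizing the prescribed passage and avoidance conditions at closed points and remaining generic over the characteristic-zero generic fibre, and one must check that the size parameters $p_1,p_2,D,e$ can indeed be chosen consistently. Everything else — passing from a map to $\mathrm{Bor}(\bG)$ to quasi-splitness, the monogenic re-embedding via Nakayama, and the numerical bookkeeping — should be comparatively routine.
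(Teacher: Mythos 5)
Your route is correct and lands the same three ingredients as the paper's, but the order in which they enter is different. The paper first exploits the classical fact that a semi-simple group over a finite field is quasi-split (Lang's theorem), so that each closed fibre of the $U$-scheme $\cB$ of Borel subgroups has a $k(u_i)$-rational point $b_i$; Lemma~\ref{OjPan} (a Bertini-over-finite-fields statement, ultimately a Poonen-type result cited from \cite[Lemma 3.3]{Pan1}) then cuts a finite \'etale $Y'\subset\cB$ \emph{through} those prescribed rational points; and only afterwards does Lemma~\ref{F1F2} manufacture the two coprime degrees, by forming $Y''=(Y'\otimes_A A_1)\sqcup(Y'\otimes_A A_2)$ for two local finite \'etale extensions $A_1,A_2/A$ whose residue degrees over $k$ are distinct large primes, and by closed-embedding $Y''$ into $\mathbb A^1_U$ so as to avoid $\mathcal Z$, $\{0\}\times U$, $\{1\}\times U$. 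You instead build the desired degree profile directly into a single Bertini step: Lang--Weil supplies points of degrees $p_1$ and $p_2$ in each $\mathrm{Bor}(\bG)_u$, Poonen's Bertini with finite local conditions cuts $Y_0$ through exactly those points and keeps all other fibre degrees large, and a monogenicity/Nakayama argument then re-embeds $Y_0$ into $\mathbb A^1_U$. What you gain is that you never need to split off the two coprime extensions $A_1,A_2$, nor rely on the fact that the rational Borel exists; what it costs is a heavier Bertini argument with simultaneous local conditions at points of prescribed degrees, whereas the paper's Lemma~\ref{OjPan} only needs sections passing through given rational points (already available in the literature) and shifts the coprimality bookkeeping into the purely field-theoretic Lemma~\ref{F1F2}. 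Both proofs hinge on the same two facts you isolate: Poonen-type Bertini over the finite residue fields, and $\Pic(\P^1_u-Y_u)=\Z/\gcd\{\deg P:P\in Y_u\}=0$ once $Y_u$ carries points of two coprime degrees; and you are right that the genuinely delicate step is the simultaneous smoothing of all closed fibres while realizing the passage/avoidance conditions and controlling the generic fibre.
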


The preprint is organized as follows.
In Section \ref{useful} a useful lemma is proved, namely Lemma \ref{F1F2}.
In Section \ref{SchemeY_section} Propositions \ref{SchemeY22} is proved.
In Section \ref{semi-simple} Theorem \ref{th:psg} is proved.
Theorem \ref{th:psv} is derived from
\cite[Thm. Main]{PSt} and Theorem \ref{th:psg} in Section
\ref{sect:outline}.
Theorem \ref{MainThm1}
is proved at the very end of Section
\ref{sect:outline}.
\subsection*{Acknowledgements} 
The authors thank the excellent environment of the International Mathematical Center at POMI.
This work is supported by Ministry of Science and Higher Education of the Russian Federation, agreement \textnumero~ 075-15-2022-289.

\section{A useful lemma}\label{useful}
We will need the following lemma to prove Proposition \ref{SchemeY22}. This lemma itself
is proved at the end of the present section.
Suppose $k(v)$ is a {\bf finite} field. Write $k$ for $k(v)$ in this Section.
Let $\mathcal O$ be the semi-local ring of finitely many {\bf closed point} on a
$V$-smooth irreducible $V$-projective $V$-scheme $X$.
Set $U=\spec \mathcal O$.
Let ${\bf u}\subset U$ be the set of all closed points in $U$.
For a point $u\in {\bf u}$ let $k(u)$ be its residue field.
It is a finite extension of the field $k$
and thus it is finite. Note that $\cO=\Gamma(U,\cO_U)$.
The following lemma is a version in our context of
\cite[Lemma 3.1]{Pan3}. It is proved at the end of this Section.

\begin{lem}
\label{F1F2}
Let $\mathcal Z\subset\mathbb A^1_U$ be a closed subscheme finite over $U$.
Let $Y^{\prime} \to U$ be a finite \'{e}tale morphism such that
for any closed point $u_i$ in $U$ the fibre $Y^{\prime}_{u_i}$ of $Y^{\prime}$ over $u_i$
contains a $k(u_i)$-rational point. Then there are finite field extensions
$k_1$ and $k_2$ of the finite field $k$
and finite etale extensions $A_1/A$ and $A_2/A$ with local $A$-algebras $A_1,A_2$ with the residue
fields $k_1$ and $k_2$ respectively and
such that \\
(i) the degrees $[k_1: k]$ and $[k_2: k]$ are coprime,\\
(ii) $k(u_i) \otimes_k k_r$ is a field for $r=1$ and $r=2$,\\
(iii) the degrees $[k_1: k]$ and $[k_2: k]$ are strictly greater than any of the degrees
$[k(z): k(u)]$, where $z$ runs over all closed points of $\mathcal Z$,\\
(iv) there is a closed embedding of $U$-schemes
$Y^{\prime\prime}=((Y^{\prime}\otimes_A A_1) \coprod (Y^{\prime}\otimes_A A_2)) \xrightarrow{i} \mathbb A^1_U$,\\
(v) for $Y=i(Y^{\prime\prime})$ one has $Y \cap \mathcal Z = \emptyset$, $Y\cap (\{0\}\times U)=\emptyset$, $Y\cap (\{1\}\times U)=\emptyset$; \\
(vi) for any closed point $u_i$ in $U$ one has
$Pic(\P^1_{u_i}-Y_{u_i})=0$.
\end{lem}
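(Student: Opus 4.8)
The plan is: build $k_1,k_2$ from two large coprime primes, get $A_1,A_2$ by lifting irreducible polynomials, and then --- the heart of the matter --- embed the finite \'etale $U$-scheme $Y''$ as a closed subscheme of $\mathbb A^1_U$; items (i)--(iii) will be immediate from the choice of primes, while (v) and (vi) will be read off from the fibrewise shape of $Y''$ once (iv) is established. Throughout write $\mathbf u=\{u_1,\dots,u_m\}$ for the closed points of $U$, $d_i:=[k(u_i):k]$, $Y'=\Spec B'$ with $B'$ finite \'etale over the (Noetherian, semilocal) ring $\cO$ of constant rank $D'$ (here $D'\geq 1$ because each $Y'_{u_i}$ is nonempty), and $N_{\mathcal Z}:=1+\max_z[k(z):k(u)]$, the maximum taken over the finitely many closed points $z$ of $\mathcal Z$ with image $u$ in $U$.

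First I would fix an integer $N_0$, depending only on $\#k$ and $D'$, so large that each of the finite fields $k(u_1),\dots,k(u_m)$ carries more than $2D'$ monic irreducible polynomials of every degree $\geq N_0$ (possible, that count growing with the degree). Since $d_1,\dots,d_m$ together have only finitely many prime divisors, I can then choose two distinct primes $n_1,n_2$, each exceeding $\max(N_0,N_{\mathcal Z})$ and each coprime to every $d_i$, and let $k_r$ be the degree-$n_r$ extension of the finite field $k$. This already yields (i) ($n_1,n_2$ distinct primes), (ii) ($\gcd(d_i,n_r)=1$ forces $k(u_i)\otimes_k k_r$ to be a field), and (iii) ($[k_r:k]=n_r\geq N_{\mathcal Z}$). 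To produce $A_r$: write $k_r=k[t]/(f_r)$ with $f_r$ monic irreducible, lift to a monic $\widetilde f_r\in A[t]$, and set $A_r:=A[t]/(\widetilde f_r)$. It is $A$-free of rank $n_r$; since $\widetilde f_r\bmod m_A=f_r$ is separable, $\operatorname{disc}(\widetilde f_r)\in A^{\times}$, so $A_r/A$ is finite \'etale; Gauss's lemma in the UFD $A[t]$ makes $\widetilde f_r$ irreducible over $K$, so $A_r$ is a domain; and $A_r\otimes_A k=k_r$ is a field while every maximal ideal of $A_r$ contracts to $m_A$ (being integral over the one-dimensional local domain $A$). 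Hence $A_r$ is a local DVR with residue field $k_r$, as wanted.

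The main obstacle is (iv). Put $B'':=(B'\otimes_A A_1)\times(B'\otimes_A A_2)$, which is finite \'etale over $\cO$ with $\Spec B''=Y''$; a closed $U$-embedding $Y''\hookrightarrow\mathbb A^1_U=\Spec\cO[t]$ exists precisely when $B''$ is generated by one element over the semilocal ring $\cO$. As $\cO/\operatorname{rad}\cO\cong\prod_i k(u_i)$, the map $B''\twoheadrightarrow\prod_i(B''\otimes_\cO k(u_i))$ is onto; so it suffices to exhibit a generator of each fibre algebra $B''\otimes_\cO k(u_i)$ --- then, lifting the tuple to $b\in B''$, the finite $\cO$-module $B''/\cO[b]$ has all fibres zero and dies by Nakayama, forcing $B''=\cO[b]$. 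Thus everything reduces to the \emph{fibrewise} claim: each $B''\otimes_\cO k(u_i)$ is monogenic over the \emph{finite} field $k(u_i)$. This is the step that genuinely uses the size of $k(u_i)$ and where one must work harder than over infinite residue fields. Here is how I would run it: since $m_A\subseteq\operatorname{rad}\cO$ and $\gcd(d_i,n_r)=1$, the polynomial $f_r$ stays irreducible over $k(u_i)$, so $(\cO\otimes_A A_r)\otimes_\cO k(u_i)=K_{i,r}:=k(u_i)\otimes_k k_r$ is a field of degree $n_r$ over $k(u_i)$; base-changing the finite \'etale $k(u_i)$-scheme $Y'_{u_i}$ (a disjoint union of at most $D'$ spectra of separable extensions of $k(u_i)$, one of them being $k(u_i)$ itself) along $K_{i,r}$ shows $B''\otimes_\cO k(u_i)$ is a product of at most $2D'$ separable field extensions of $k(u_i)$, each of degree a multiple of $n_1$ or of $n_2$ (hence $\geq\min(n_1,n_2)\geq N_0$), among which exactly one factor of degree $n_1$ and one of degree $n_2$. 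By the choice of $N_0$ there are more than enough monic irreducibles of each occurring degree over $k(u_i)$, so $B''\otimes_\cO k(u_i)\cong k(u_i)[t]/(g)$ with $g$ monic squarefree, i.e.\ monogenic. This closes (iv).

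Finally (v) and (vi) fall out of the structure of $Y:=i(Y'')$, which is finite over $U$ with $Y\times_U u_i\cong Y''_{u_i}$. By the computation above each closed point of $Y_{u_i}$ has residue degree $\geq\min(n_1,n_2)$ over $k(u_i)$, which is both $>1$ and $\geq N_{\mathcal Z}>[k(z):k(u)]$ for every closed $z\in\mathcal Z$; since two distinct closed subschemes of $\mathbb A^1_{k(u_i)}$ cannot share a closed point carrying different residue fields, $Y$ is disjoint from $\{0\}\times U$, from $\{1\}\times U$ and from $\mathcal Z$ over each $u_i$, hence globally because each of these intersections is finite over $U$; this is (v). And $Y_{u_i}$ contains closed points of coprime degrees $n_1$ and $n_2$ over $k(u_i)$ --- the two images of the $k(u_i)$-point of $Y'_{u_i}$ --- so, $\Pic(\mathbb P^1_{k(u_i)})$ being $\ZZ$ via the degree, $\Pic(\mathbb P^1_{u_i}-Y_{u_i})$ is $\ZZ$ modulo the subgroup generated by the degrees of the closed points of $Y_{u_i}$, which contains $\gcd(n_1,n_2)=1$; this is (vi).
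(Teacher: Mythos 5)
Your strategy is the same as the paper's: build $k_1,k_2$ as residue fields of two large coprime primes, lift to finite \'etale local extensions $A_r/A$, embed each closed fibre of $Y''$ into $\mathbb A^1_{u_i}$ by choosing enough irreducible polynomials, and glue the fibrewise embedding to a global one via Nakayama. The paper factors the fibrewise construction into Lemma \ref{F1F2_copy_very_final} and your argument is essentially an inline proof of that auxiliary lemma, so in that sense yours is more self-contained.

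There is, however, a genuine gap in the fibrewise step. You assert that $Y'_{u_i}\times_{k(u_i)} K_{i,r}$ ``is a product of at most $2D'$ separable field extensions of $k(u_i)$,'' which is the premise for the irreducible-polynomial count that follows. But this requires the primes $n_r$ to be coprime to each degree $[L_{ij}:k(u_i)]$ of a closed point of $Y'_{u_i}$, not only to $d_i=[k(u_i):k]$; otherwise some $L_{ij}\otimes_{k(u_i)}K_{i,r}$ splits into several factors and the factor count (and hence the monogenicity argument) breaks down. Your choice of $n_1,n_2$ only guarantees coprimality with the $d_i$'s and $n_r\geq N_0$, and nothing in the definition of $N_0$ forces $N_0>D'$. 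This is exactly the condition the paper builds into Lemma \ref{F1F2_copy_very_final} through the requirement $q_j>d(Y'_u)$ (Notation \ref{d(Y_u)}). The fix is small --- additionally require $n_1,n_2>D'$, which since they are prime forces coprimality with every $[L_{ij}:k(u_i)]\leq D'$ --- but as written the step fails. Everything else (the DVR lift, the Nakayama argument, the disjointness in (v) via residue-degree comparison after noting that closed points of a finite $U$-scheme lie over closed points of the semilocal $U$, and the $\Pic$ computation in (vi) via $\gcd(n_1,n_2)=1$) is sound.
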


\begin{notation}\label{notn: F1F2}
Let $k$ be the finite field of characteristic $p$,
$k'/k$ be a finite field extensions.
Let $c=\sharp(k)$ (the cardinality of $k$).
For a positive integer $r$ let $k'(r)$
be a unique field extension of the degree $r$ of the field $k'$.
\end{notation}

\begin{notation}
\label{d(Y_u)}
For any \'{e}tale $k$-scheme $W$
set $d(W)=\text{max} \{ deq_{k} k(v)| v\in W \}$.
\end{notation}

\begin{lem}
\label{F1F2_copy_very_final}
Let $k$ be the finite field from the lemma \ref{F1F2}.
Let $U$ be the the semi-local scheme as in the lemma \ref{F1F2}.
Let $u \in U$ be a closed point and let $k(u)$ be its residue field (it is a finite extension of the finite field $k$).
Let $\mathcal Z_u\subset\mathbb A^1_u$ be a closed subscheme finite over $u$.
Let $Y^{\prime}_u \to u$ be a finite \'{e}tale morphism such that
$Y^{\prime}_u$
contains a $k(u)$-rational point. Then for any
two different primes $q_1,q_2\gg 0$
the finite field extensions
$k(q_1)$ and $k(q_2)$ of the finite field $k$ satisfy the following conditions \\
(i) for $j=1,2$ one has $q_j > d(Y'_u)$;\\
(ii) for any $j=1,2$ and any point $v\in Y'_u$ the $k$-algebra $k(v) \otimes_k k(q_j)$ is a field;\\
(iii) both primes $q_1,q_2$ are strictly greater than
$\text{max}\{[k(z): k]| z\in \mathcal Z_u \}$;\\
(iv) there is a closed embedding of $u$-schemes
$Y^{\prime\prime}_u=((Y^{\prime}_u\otimes_k k_1) \coprod (Y^{\prime}_u\otimes_k k_2)) \xrightarrow{i_u} \mathbb A^1_u$,\\
(v) for $Y_u=i_u(Y^{\prime\prime}_u)$ one has $Y_u \cap \mathcal Z_u = \emptyset$, $Y_u\cap (\{0\}\times u)=\emptyset$, $Y_u\cap (\{1\}\times u)=\emptyset$;\\
(vi) $Pic(\P^1_{u}-Y_u)=0$.\\
\end{lem}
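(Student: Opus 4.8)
The plan is to reduce everything to elementary properties of finite fields together with the divisor exact sequence on $\P^1$ over a field, and to verify the six conditions uniformly once $q_1,q_2$ exceed an explicit bound $N$ that depends only on $|k(u)|$, on the fixed scheme $Y'_u$, and on $\mathcal Z_u$. Recall that $k(u)$ is a finite field (it is the residue field at a closed point of $X$, hence finite over $k$); that for a finite field $F$ and finite extensions $F_1,F_2$ of $F$ of degrees $a,b$ the $F$-algebra $F_1\otimes_F F_2$ is a field if and only if $\gcd(a,b)=1$, in which case it has degree $ab$ over $F$; and that the number $I_n$ of monic irreducible polynomials of degree $n$ in $k(u)[t]$ equals $\frac1n\sum_{d\mid n}\mu(d)\,|k(u)|^{n/d}$, which tends to $\infty$ with $n$. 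Set $r:=\#\{v\in Y'_u\}$, a finite number independent of $q_1,q_2$. I would fix $N$ with $N>d(Y'_u)$, with $N>\max\{[k(z):k]\mid z\in\mathcal Z_u\}$, and with $I_n>2r$ for every $n\ge N$, and claim that any two distinct primes $q_1,q_2>N$ satisfy (i)--(vi).

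Conditions (i) and (iii) hold by the choice of $N$. For (ii): given $v\in Y'_u$ we have $[k(v):k]\le d(Y'_u)<q_j$, and $q_j$ is prime, so $\gcd([k(v):k],q_j)=1$ and $k(v)\otimes_k k(q_j)$ is a field, of degree $[k(v):k]\,q_j$ over $k$. Consequently $Y''_u=\coprod_{v}\Spec\big(k(v)\otimes_k k(q_1)\big)\sqcup\coprod_{v}\Spec\big(k(v)\otimes_k k(q_2)\big)$ is, as a $u$-scheme (via the projections to $Y'_u$), a disjoint union of at most $2r$ spectra of finite field extensions of $k(u)$, the summand coming from $v$ and $q_j$ having degree $[k(v):k(u)]\,q_j\ge q_j\ge N$ over $k(u)$.

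To obtain the closed embedding in (iv) I would exhibit a surjection of $k(u)$-algebras $k(u)[t]\twoheadrightarrow\Gamma(Y''_u)$, i.e. show that the finite \'etale $k(u)$-algebra $\Gamma(Y''_u)=\prod_L L$ is monogenic. For each field factor $L$, of degree $n_L\ge N$ over $k(u)$, choose a monic irreducible $f_L\in k(u)[t]$ of degree $n_L$ with $k(u)[t]/(f_L)\cong L$, arranged so that the $f_L$ are pairwise distinct; this is possible since for each $n$ at most $2r$ factors have degree $n$ while $I_n>2r$ as soon as $n\ge N$. The $f_L$ being pairwise coprime, the Chinese remainder theorem gives $k(u)[t]/\big(\prod_L f_L\big)\cong\prod_L L=\Gamma(Y''_u)$, hence the desired closed embedding $i_u\colon Y''_u\hookrightarrow\mathbb A^1_u$. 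Now put $Y_u:=i_u(Y''_u)$, which is isomorphic to $Y''_u$ as a $u$-scheme. Every closed point of $Y_u$ has residue field of degree $n_L\ge N$ over $k(u)$; the points $\{0\}\times u$ and $\{1\}\times u$ have degree $1$; and any closed point $z$ of $\mathcal Z_u$ satisfies $[k(z):k(u)]\le[k(z):k]<N$. Since these degrees can never equal any $n_L$, the scheme $Y_u$ is disjoint from $\{0\}\times u$, from $\{1\}\times u$, and from $\mathcal Z_u$, which is (v). For (vi), $Y'_u$ contains a $k(u)$-rational point $v_0$, so among the factors of $\Gamma(Y''_u)$ are $k(v_0)\otimes_k k(q_1)$ and $k(v_0)\otimes_k k(q_2)$, of degrees $q_1$ and $q_2$ over $k(u)$; hence $Y_u$ has closed points of degrees $q_1$ and $q_2$. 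Feeding $Y_u$ into the divisor exact sequence $\bigoplus_{y\in Y_u}\ZZ\to\Pic(\P^1_u)=\ZZ\to\Pic(\P^1_u-Y_u)\to 0$, in which the first arrow sends $y$ to $[k(y):k(u)]$, identifies $\Pic(\P^1_u-Y_u)$ with $\ZZ$ modulo the subgroup generated by the degrees of the closed points of $Y_u$; this subgroup contains $q_1\ZZ+q_2\ZZ=\ZZ$ because $q_1\ne q_2$ are primes, so $\Pic(\P^1_u-Y_u)=0$.

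There is no deep obstacle here; the one genuine point is the counting step in (iv), namely that the monic irreducibles $f_L$ can be chosen pairwise distinct, and this is forced by $I_n>2r$ for $n\ge N$ --- which in turn holds because the large primes $q_1,q_2$ make every component of $Y''_u$ have large degree over $k(u)$ while the total number of components stays bounded by the fixed scheme $Y'_u$. Conditions (v) and (vi) then follow for free, respectively from comparing residue-field degrees and from the $k(u)$-rational point of $Y'_u$, and (i)--(iii) are immediate.
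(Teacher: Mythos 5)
Your proof is correct and complete, and it supplies a full argument for a lemma the paper states without proof. The one step that actually needs care---namely, producing the closed embedding in (iv)---you handle cleanly: once every component of $Y''_u$ has degree $\ge q_j > N$ over $k(u)$ and the total number of components is bounded by $2r$, you can pick pairwise distinct monic irreducibles by the prime-counting estimate $I_n>2r$ for $n\ge N$, and the Chinese Remainder Theorem gives surjectivity of $k(u)[t]\to\Gamma(Y''_u)$. The degree comparisons then give (v) for free, and the $k(u)$-rational point of $Y'_u$ provides closed points of $Y_u$ of the coprime degrees $q_1,q_2$, killing $\Pic(\P^1_u-Y_u)$.

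The route you take is the natural one and differs only cosmetically from what the authors appear to have in mind: you count monic irreducibles of degree $n$ over $k(u)$ via the M\"obius formula, whereas the authors' drafts of closely related lemmas count degree-$n$ closed points of $\mathbb A^1_{k(u)}$ via the Euler totient $\varphi$ applied to $|k(u)(n)^\times|$, together with divisibility coming from surjectivity of norm maps between cyclic groups. These are two bookkeeping schemes for the same quantity, and the underlying mechanism---large primes force the components of $Y''_u$ to have large degree over $k(u)$ while their number stays bounded, so there are enough closed points of $\mathbb A^1_u$ of each required degree to embed $Y''_u$---is identical. If anything, your formulation is slightly more streamlined, since fixing $N$ once with $\min_{n\ge N}I_n>2r$ handles all the counting at once and avoids the case analysis the paper's auxiliary lemmas carry along.
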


\begin{proof}[Proof of the lemma \ref{F1F2}]
We prove the lemma for the case of local $U$ and leave the general case to the reader.
So, we may suppose that there is only one closed point $u$ in $U$.
Let $k(u)$ be its residue field.
Let $Y^{\prime} \to U$ be the finite \'{e}tale morphism from the lemma \ref{F1F2}.
Let $Y^{\prime}_{u}=Y'\times_U u$ be the fibre of $Y^{\prime}$ over $u$.
By the hypotheses of the lemma \ref{F1F2}
the $k(u)$-scheme $Y^{\prime}_{u}$ contains a $k(u)$-rational point.
Let $k_1/k$ and $k_2/k$ be the field extensions from
the lemma \ref{F1F2_copy_very_final}.
Clearly, there are finite etale extensions $A_1/A$ and $A_2/A$
with local $A$-algebras $A_1$ and $A_2$
with the mentioned residue
fields $k_1$ and $k_2$.
Set
$Y''=(Y^{\prime}\otimes_A A_1) \sqcup (Y^{\prime}\otimes_A A_2)$
and
$Y''_u=Y''\times_U u$.
Then one has
$Y''_u=(Y^{\prime}_u\otimes_k k_1) \sqcup (Y^{\prime}_u\otimes_k k_2)$.\\\\
Let
$i_u: Y''_u \hookrightarrow \mathbb A^1_u$
be the closed embedding from the lemma
\ref{F1F2_copy_very_final}. Then the pull-back map of the $k(u)$-algebras
$i^*_u: k(u)[t]\to \Gamma(Y''_u,\cO_{Y''_u})$ is surjective.
Consider the element $b=i^*_u(t)\in \Gamma(Y''_u,\cO_{Y''_u})$.
Let $B\in \Gamma(Y'',\cO_{Y''})$ be a lift of the element $b$.

Consider a unique $\cO$-algebra homomorphism
$\cO[t]\xrightarrow{I^*} \Gamma(Y'',\cO_{Y''})$
which takes $t$ to $B$.
Since $\Gamma(Y'',\cO_{Y''})$ is a finitely generated $\cO$-module
and $I^*\otimes_{\cO} k(u)=i^*$,
the Nakayama lemma shows that the map $I^*$ is surjective.
Hence the induced scheme morphism
$I: Y''\to \mathbb A^1_U$
is a closed embedding.
The assertion (iv) is proved.
Set $Y=I(Y'')$.

Clearly, the two closed embeddings
$Y''_u\to Y''\xrightarrow{I} \mathbb A^1_U$
and
$Y''_u\xrightarrow{i_u} \mathbb A^1_u \to \mathbb A^1_U$
coincide.
Lemma \ref{F1F2_copy_very_final} completes now the proof of the lemma
\ref{F1F2}.
\end{proof}

\section{Proof of Proposition \ref{SchemeY22}}
\label{SchemeY_section}
The main aim of this section is to prove Proposition \ref{SchemeY22}.
Recall the following result \cite[Lemma 3.1]{Pan3} which is a simplified version of
\cite[Lemma 3.3]{Pan1}.
\begin{lem}(\cite[Lemma 3.1]{Pan3})
\label{OjPan}
Let $S=\text{Spec}(R)$ be a regular
semi-local
{\bf irreducible
}
scheme
such that
the residue field at any of its closed points is finite.
Let $T$ be a closed
subscheme of $S$. Let $W$ be a closed subscheme of the projective space
$\Bbb P^n_S$.
Assume that over $T$ there exists a section
$\delta:T\to W$
of the canonical projection $W\to S$. Assume further that
$W$ is smooth and equidimensional over $S$,
of relative dimension $r$.
Then there exists a closed subscheme $\tilde S$ of $W$ which is finite
\'etale over
$S$ and contains $\delta(T)$.
\end{lem}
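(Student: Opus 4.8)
Since $T\to S$ is a closed immersion and $\delta$ is a section of $W\to S$ over $T$, the morphism $\delta\colon T\to W$ is a closed immersion, and its image $Z:=\delta(T)$, with the induced scheme structure, has the property that $Z\to S$ is again a closed immersion; in particular, for every $s\in S$ the fibre $Z_s:=Z\times_S s$ is either empty or $\Spec k(s)$. Let $\bu$ be the finite set of closed points of $S$; since $S$ is semi-local, every nonempty closed subset of $S$ meets $\bu$. The plan is to realise $\tilde S$ as a complete intersection inside $W$: fix a closed immersion $W\hookrightarrow\P^n_S$ and look for sections $H_1,\dots,H_r$ of $\mathcal O_{\P^n_S}(d)$, for a suitable $d\gg0$, each vanishing on $Z$, such that the scheme-theoretic intersection
\[
  \tilde S\;:=\;W\cap V(H_1)\cap\dots\cap V(H_r)
\]
has, for every $s\in\bu$, fibre over $k(s)$ which is smooth of dimension $0$ (equivalently, finite \'etale). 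One may assume $r\ge1$, the case $r=0$ being trivial with $\tilde S=W$.

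Granting such $H_i$, the statement follows formally. The $H_i$ vanish on $Z$, so $\tilde S$ contains $Z$ as a closed subscheme. The morphism $\tilde S\to S$ is proper; its closed fibres are $0$-dimensional, and since the locus of $s\in S$ over which the fibre is positive-dimensional is closed (semicontinuity of fibre dimension together with properness) and misses $\bu$, it is empty, so $\tilde S\to S$ is quasi-finite, hence finite. Consequently every irreducible component of $\tilde S$ has dimension $\le\dim S=\dim W-r$ and, being cut in $W$ by $r$ equations, also $\ge\dim W-r$; thus the $H_i|_W$ form a regular sequence on the regular scheme $W$, $\tilde S$ is Cohen--Macaulay and equidimensional of dimension $\dim S$, and by miracle flatness $\tilde S\to S$ is flat, hence finite locally free. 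Finally, the non-\'etale locus of a finite flat morphism is closed in the source with closed image in $S$ missing $\bu$, hence empty; so $\tilde S\to S$ is finite \'etale (and surjective if $W\to S$ is, the $k(s)$-length of $\tilde S_s$ being $d^{\,r}\deg W_s>0$).

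To produce the $H_i$, let $\mathcal I_Z\subset\mathcal O_{\P^n_S}$ be the ideal sheaf of $Z$. For $d\gg0$, a short diagram chase with $0\to\mathcal I_Z(d)\to\mathcal O_{\P^n_S}(d)\to\mathcal O_Z(d)\to0$ --- using Serre vanishing, the affineness of $Z$, and the surjection $R\twoheadrightarrow\prod_{s\in\bu}k(s)$ furnished by the Chinese Remainder Theorem --- shows that the restriction map
\[
  H^0\!\bigl(\P^n_S,\mathcal I_Z(d)\bigr)\longrightarrow\prod_{s\in\bu}H^0\!\bigl(\P^n_{k(s)},\mathcal I_{Z_s}(d)\bigr)
\]
is surjective. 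So it is enough to find, for each $s\in\bu$, degree-$d$ hypersurfaces $H^{(s)}_1,\dots,H^{(s)}_r$ in $\P^n_{k(s)}$ vanishing on $Z_s$ such that $W_s\cap V(H^{(s)}_1)\cap\dots\cap V(H^{(s)}_r)$ is smooth of dimension $0$, and then, for each fixed $i$, to lift the tuple $(H^{(s)}_i)_{s\in\bu}$ to a global $H_i\in H^0(\P^n_S,\mathcal I_Z(d))$; as scheme-theoretic intersection commutes with base change, the fibre of $\tilde S$ over $s$ is then exactly $W_s\cap V(H^{(s)}_1)\cap\dots\cap V(H^{(s)}_r)$. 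One takes $d$ large enough to serve the finitely many $s\in\bu$ at once.

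The remaining, and only genuinely non-formal, step is this fibrewise construction over a finite field $k=k(s)$: given the smooth quasi-projective $k$-variety $W_s\subseteq\P^n_k$ of pure dimension $r\ge1$ and at most one marked $k$-rational point $z\in W_s$ (the reduced point $Z_s$, when non-empty), find $H^{(s)}_1,\dots,H^{(s)}_r$ of degree $d$ vanishing at $z$ whose successive intersections with $W_s$ are smooth of dimensions $r-1,r-2,\dots,0$. I would argue by induction on the number of equations, applying at step $j$ a Bertini theorem valid over the finite field $k$ --- in the spirit of Poonen's Bertini theorem, in the version permitting one to prescribe the finite-order behaviour of the hypersurface at the marked point $z$ while keeping the section smooth away from $z$ --- to the smooth variety $W_s\cap V(H^{(s)}_1)\cap\dots\cap V(H^{(s)}_{j-1})$ of dimension $r-j+1\ge1$, choosing the $1$-jet of $H^{(s)}_j$ at $z$ to vanish there with differential in general position, along $T_zW_s$, with respect to those of $H^{(s)}_1,\dots,H^{(s)}_{j-1}$, so that the new intersection stays smooth, also at $z$. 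This is the hard part: the classical Bertini theorem fails over a fixed finite field, so one cannot simply take a generic hyperplane, and the remedy --- passing to high degree $d$ and running a density argument among degree-$d$ hypersurfaces through $Z_s$, compatibly with the prescribed jet at $z$ --- is where the real content lies. Everything else (the cohomology vanishing and Chinese-Remainder lifting, the reduction to closed points by semi-locality, and the passage from fibrewise to global finite \'etaleness via properness, miracle flatness, and openness of the \'etale locus) is routine.
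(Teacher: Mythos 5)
Your proof is correct in approach and matches, in spirit, what the cited \cite[Lemma 3.3]{Pan1} actually does: fibre-by-fibre Bertini over the finite residue fields (in the style of Poonen's theorem), a Chinese-Remainder-style lift of the finitely many fibrewise hypersurfaces to global ones, and then the standard chain (proper $+$ quasi-finite $\Rightarrow$ finite; dimension count $\Rightarrow$ regular sequence $\Rightarrow$ Cohen--Macaulay; miracle flatness; closedness of the non-\'etale locus) to deduce finite \'etaleness from the behaviour over the closed points of the semi-local base. The difference from the paper's own treatment is mostly one of framing: the paper does not prove this lemma directly, but reduces it to \cite[Lemma 3.3]{Pan1}, which is stated for an affine piece $X=\bar X\setminus X_\infty$ of a projective scheme; to make that reduction go through the paper passes through a Veronese re-embedding $\P^n_S\hookrightarrow\P^d_S$ so that $\delta(T)$ lands at the origin of an affine chart and the part at infinity $W_\infty$ has strictly smaller fibre dimension, then takes $\tilde S\subset W\setminus W_\infty$ and observes that finiteness over $S$ makes it closed in $W$. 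You avoid that contortion entirely by working with the projective $W$ itself and with sections of $\mathcal I_{\delta(T)}(d)$; this is cleaner, and the Serre-vanishing/CRT argument you give for surjectivity of the restriction to the closed fibres is the right replacement for the paper's ``choose coordinates so $\delta(T)=\{X_1=\cdots=X_d=0\}$ and lift by CRT.'' One small inaccuracy worth flagging: you fix a single degree $d$ for all of $H_1,\dots,H_r$, but when one applies Poonen's Bertini iteratively --- cutting by $H_j$ the variety $W_s\cap V(H_1^{(s)})\cap\cdots\cap V(H_{j-1}^{(s)})$, whose complexity depends on the earlier choices --- the admissible degrees generally have to grow with $j$, which is exactly why the paper's related argument explicitly allows the $H_i(s)$ to have ``in general increasing degrees.'' This does not affect the rest of your argument (the dimension count, properness, miracle flatness, and the \'etale-locus step are all insensitive to the degrees being unequal), so it is a cosmetic fix: allow $H_i\in H^0(\P^n_S,\mathcal I_{\delta(T)}(d_i))$ with $d_1\le\cdots\le d_r$.
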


\begin{proof}[Proof of Lemma \ref{OjPan}]
Formally this lemma is a bit different of the one \cite[Lemma 3.3]{Pan1}.
However it can be derived from \cite[Lemma 3.3]{Pan1} as follows.
There is a Veronese embedding $\Bbb P^n_S\hookrightarrow \Bbb P^d_S$
and linear homogeneous coordinates $X_0,X_1,...,X_d$ on $\Bbb P^d_S$
such that the following holds:\\
\smallskip
{\rm{(1)}}
if
$W_{\infty}=W\cap \{X_0=0\}$ is the intersection of $W$ with the
hyperplane
$X_0=0$, then for every closed
point $s\in S$ the closed fibres of $W_\infty$
satisfy
$\text{dim}(W_{\infty}(s))<r=\text{dim}(W(s))$; \\
{\rm{(2)}}
$\delta$ maps $T$ into
the closed subscheme of $\Bbb P^d_T$ defined by
$X_1=\dots=X_d=0$.\\
\smallskip
Applying now \cite[Lemma 3.3]{Pan1} to $\bar X=W$, $X_{\infty}=W_{\infty}$ and $X=W-W_{\infty}$
we get a closed subscheme $\tilde S$ of $W-W_{\infty}$ which is finite
\'etale over
$S$ and contains $\delta(T)$. Since $\tilde S$ is finite over $S$ it is closed in $W$ as well.
\end{proof}



\begin{proof}(of Proposition \ref{SchemeY22}).
Let $U$ and $\bG$ be as in Proposition~\ref{SchemeY22} and
let $u_1,\ldots,u_n$ be all the closed points of $U$. Let $k(u_i)$ be the residue field of $u_i$.
Since $k(v)$ is finite the fields $k(u_i)$ are all finite.
Consider the reduced closed subscheme $\bu$ of $U$,
whose points are $u_1$, \ldots, $u_n$. Thus
\begin{equation}\label{bf_u}
 \bu\cong\coprod_i\spec k(u_i).
\end{equation}
Set $\bG_\bu=\bG\times_U\bu$. By $\bG_{u_i}$ we denote the fiber of $\bG$ over $u_i$.
For every $u_i$ in $\bu$ choose a Borel subgroup $\bB_{u_i}$ in $\bG_{u_i}$.
{\it The latter is possible since the fields $k(u_i)$ are finite.}
Let $\cB$ be the $U$-scheme of Borel subgroup schemes of $\bG$.
It is a smooth projective $U$-scheme (see~\cite[Cor.~3.5, Exp.~XXVI]{SGA3}).
The subgroup $\bB_{u_i}$ in $\bG_{u_i}$ is a $k(u_i)$-rational point~$b_i$ in the fibre of $\cB$ over the point $u_i$.
Now apply Lemma
\ref{OjPan} to
the scheme
$U$ for $S$,
the scheme $\mathbf u$ for $T$,
the scheme $\cB$ for $W$
and to the section $\delta: \mathbf u \to \cB$,
which takes the point $u_i$ to the point $b_i \in \cB$.
Since the scheme $\cB$ is $U$-smooth and
equi-dimensional over $U$
we are under the assumption of Lemma \ref{OjPan}. Hence
there is a closed subscheme $Y^{\prime}$ of $\cB$ such that
$Y^{\prime}$ is finite \'etale over $U$ and all the $b_i$'s are in $Y$.
The $U$-scheme $Y^{\prime}$ satisfies the hypotheses of Lemma
\ref{F1F2}.

Take the closed subscheme $Y$ of $\mathbb A^1_U$ as in the item (v)
of Lemma \ref{F1F2}. For that specific $Y$ the conditions (ii) and (iii) of the Proposition are
obviously satisfied. The condition (i) is satisfied too, since already
it is satisfied for the $U$-scheme $Y^{\prime}$. This proves the Proposition.
\end{proof}

\section{Proof of Theorem \ref{th:psg}}
\label{semi-simple}
The aim of this Section is to prove Theorem \ref{th:psg}.

For simplicity consider only the case of local $U$. Let $u\in U$ be its unique closed point.
Let $\Pi: \bG^{sc}\to \bG$ be the simply-connected cover of the semi-simple $\bG$
and
$\Pi_u: \bG^{sc}_u\to \bG_u$ be the group morphism on the closed fibres.
It is well-known that $\Pi_u$ is the simply-connected cover of the semi-simple $\bG_u$.
Put $\mu=Ker(\Pi)$. Then $\mu$ is in center of $\bG^{sc}$.

Note that the obstruction to a lift of $\cG$ up to a $\bG^{sc}$-bundle over $\P^1_U$ vanishes. Thus,
there is a $\bG^{sc}$-bundle $\cG^{sc}$ over $\P^1_U$ such that $\cG=\Pi_*(\cG^{sc})$.
This choice of $\cG^{sc}$ can be "sheafted" by any $\mu$-bundle over $\P^1_U$.
Using this observation we may and will suppose that additionally $\cG^{sc}$ is such that
$\cG^{sc}|_{1\times U}$ is trivial.

In this case the $\cG^{sc}_u$ is trivial at the point $1$ of $\P^1_u$.
Thus, by the Gille theorem \cite{GilleTorseurs}
the restriction $\cG^{sc}_u$ to $\P^1_u$ is a Nisnevich locally trivial $\bG^{sc}_u$-bundle.

Let $Y$ be the closed subscheme in $\mathbb A^1_U$ as in Proposition \ref{SchemeY22}.
We claim that the $\bG^{sc}$-bundle $\cG^{sc}|_Y$ is trivial. To check this recall that
$Y\subset \P^1_U - \mathcal Z$ and thus, $\cG^{sc}|_Y$ is of the form
$in_*(\mathcal K)$ for a $\mu_Y$-bundle $\mathcal K$ over $Y$.
Since $\mu_Y$ is contained in each maximal $Y$-torus $T$ of $\bG^{sc}_Y$
the class $[\cG^{sc}|_Y]\in H^1_{et}(Y,\bG^{sc})=H^1_{et}(Y,\bG^{sc}_Y)$
comes from $H^1_{et}(Y,T)$.
Since $\bG^{sc}_Y$ is quasi-split, we may choose a quasi-split maximal $Y$-torus $T$ of $\bG^{sc}_Y$.
In this case $H^1_{et}(Y,T)=*$ and, thus $\cG^{sc}|_Y=in_*(\mathcal K)$ is a trivial $\bG^{sc}$-bundle.
As a consequence the $\bG^{sc}$-bundle $\cG^{sc}|_{Y^h}$ is trivial too.

Put $(\cG^{sc})'=\cG^{sc}|_{\P^1_U - Y}$. Since the $\bG^{sc}$-bundle $\cG^{sc}|_{Y^h}$ is trivial
we can present $\cG^{sc}$ in the form $\cE((\cG^{sc})', \phi)$,
where $\phi: \dot Y^h\times_U \bG^{sc}\to \cG^{sc}|_{\dot Y^h}$
is a $\bG^{sc}_Y$-bundle isomorphism. In this case the $\cG^{sc}_u$ is presented in the form $\cE((\cG^{sc})'_u, \phi_u)$,
where $\phi_u: \dot Y^h_u\times_u \bG^{sc}_{u}\to \cG^{sc}|_{\dot Y^h_u}$
is a $\bG^{sc}_u$-bundle isomorphism.

Recall that $\cG^{sc}_u$ is a Nisnevich locally trivial $\bG^{sc}_u$-bundle over to $\P^1_u$.
Recall that $Pic(\P^1_u-Y_u)=0$.  Thus, by the Gille Theorem \cite{GilleTorseurs} $(\cG^{sc}_u)|_{\P^1_u-Y_u}$
is trivial. This yields that there is an element $\alpha_u\in \bG^{sc}_u(\dot Y^h_u)$
such that the $\bG^{sc}_u$-bundle $\E((\cG^{sc})'_u, \phi_u\circ \alpha_u)$ over $\P^1_u$
{\bf is trivial}.

The $u$-group scheme $\bG^{sc}_u$ is simply connected. Thus, the group
$\bG^{sc}_u(\dot Y^h_u)$ equals to the one $\bE(\bG^{sc}_u(\dot Y^h_u))$.
The scheme $\dot Y^h$ is affine. Thus, the group homomorphism
$$\bE(\bG^{sc}(\dot Y^h))\to \bG^{sc}_u(\dot Y^h_u)$$
is surjective. As a consequence the group homomorphism
$\bG^{sc}(\dot Y^h)\to \bG^{sc}_u(\dot Y^h_u)$
is surjective. Thus, there is an element
$\alpha\in \bG^{sc}(\dot Y^h)$
such that $\alpha|_{\dot Y^h_u}=\alpha_u$ in $\bG^{sc}_u(\dot Y^h_u)$.
Put
$$\cG^{sc}_{mod}=\cE((\cG^{sc})', \phi\circ \alpha) \ \ \text{and} \ \ \cG^{mod}=\Pi_*(\cG^{sc}_{mod}).$$
We already know that the $\bG^{sc}_u$-bundle
$\E((\cG^{sc})'_u, \phi_u\circ \alpha_u)$ over $\P^1_u$
is trivial over $\P^1_u$. Thus,
the $\bG_u$-bundle $\cG^{mod}_u$ is trivial too.
By the main result of \cite{Tsy} the $\bG$-bundle $\cG_{mod}$
is extended from $U$ via the pull-back along the projection
$pr_U: \P^1_U\to U$. On the other hand $\cG^{mod}|_{1\times U}$
is trivial, since $1\times U\subset \P^1_U-Y$, $\cG^{mod}|_{\P^1_U-Y}=\cG|_{\P^1_U-Y}$ and $\cG|_{\P^1_U-Y}$ is trivial.
Hence the $\bG$-bundle $\cG^{mod}$ is trivial over $\P^1_U$.
Since $\cG|_{\P^1_U - Y}=\cG^{mod}_{\P^1_U - Y}$
it follows that
the $\bG$-bundle $\cG|_{\P^1_U - Y}$ is trivial.
{\bf The theorem \ref{th:psg} is proved.
}

\section{Proof of Theorem \ref{th:psv} in the case of finite field $k(v)$}\label{sect:outline}
{\bf In this section we prove Theorem~\ref{th:psv} in the case of finite field $k(v)$.
We also prove at the very end of the present Section Theorem \ref{MainThm1}.
}

Begin with a proof of Theorem \ref{th:psv} in the case of finite field $k(v)$.
If $\bG$ is semi-simple and the field $k(v)$ is finite,
then Theorem~\ref{th:psv} is an easy consequence of Theorem \ref{th:psg} proven in Section \ref{semi-simple}.
Thus, we may and will
suppose that $\bG$ is not semi-simple. In this case the radical $Rad(\bG)$ is a positive rank $U$-torus.

Write $\bT$ for $Rad(\bG)$ below in this Section. Then $\bT$ is contained in the center of $\bG$ and the $U$-group
scheme $\bG^{ss}:=\bG/\bT$ is semi-simple. Consider the $U$-group morphism $q: \bG \to \bG^{ss}$.
Put $\cG^{ss}=q_*(\cG)$ (that is $\cG^{ss}$ is a $\cG^{ss}$-bundle obtained from $\cG$ by the group change
via the $U$-group morphism $q: \bG \to \bG^{ss}$). Let $\Pi: \bG^{sc}\to \bG^{ss}$ be the simply connected cover
of $\bG^{ss}$ and $j: \bG^{ss}\to \bG^{der}\subset \bG$ be the canonical $U$-group scheme morphism.
Then $q\circ j=\Pi$. Let $in: \bT\hookrightarrow \bG$ be the inclusion.

Put $\mathbf S:=Corad(\bG)=\bG/\bG^{der}$ and let $can: \bG \to \mathbf S$ be the canonical morphism.
It is known that the composite morphism
$can \circ in: \bT=Rad(\bG)\to Corad(\bG)=\mathbf S$
is an isogeny. Using this fact one can find
$d: \P^1_U\to \P^1_U, \ \ t\mapsto t^d, \ d\geq 1$
and a $\bT$-bundle $\cT_0$ semi-locally trivial for the Zariski topology
such that
$can_*([\cT_0]\cdot d^*([\cG]))=0$ in $H^1_{et}(\P^1_U,\mathbf S)$.

Clearly, $d^*(\cG)$ is trivial over $\P^1_U-d^{-1}(\mathcal Z)$.
Moreover, there is $\mathcal Z'$ in $\mathbb A^1_U$ finite over $U$, containing
$d^{-1}(\mathcal Z)$ such that $\cT_0|_{\P^1_U-\mathcal Z'}$ is trivial and
$\mathcal Z'\cap \{1\}\times U=\emptyset$.
Replacing $\cG$ with $[\cT_0]\cdot d^*([\cG])$
and using equalities
$i^*_0([\cT_0])=0$ and $d\circ i_0=i_0$
we see that in the case of finite field $k(v)$
the following result yields Theorem~\ref{th:psv} .
\begin{prop}\label{red_case}
Under the hypotheses of Theorem \ref{th:psv} suppose additionally the field $k(v)$ is finite.
let $\cG$ be a $\bG$-bundle over $\P^1_U$ such that $can_*(\cG)$ is a trivial $\mathbf S$-bundle
and $\cG|_{\P^1_U-\mathcal Z'}$ is trivial, then $i^*_0(\cG)$ is trivial.
\end{prop}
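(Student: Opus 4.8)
The plan is to reduce the statement to the semi‑simple case already settled in Theorem~\ref{th:psg} together with the main result of~\cite{Tsy}; throughout I assume $U$ is local with closed point $u$ (the semi‑local case is analogous). I keep the notation of the paragraph preceding the proposition: $\bT=\Rad(\bG)$ is a central $U$‑torus, $\bG^{ss}=\bG/\bT$, $q\colon\bG\to\bG^{ss}$, $\Pi\colon\bG^{sc}\to\bG^{ss}$ is the simply connected cover, $j\colon\bG^{sc}\to\bG$ is the canonical morphism (so $q\circ j=\Pi$), and $in\colon\bT\hookrightarrow\bG$; write $i_0,i_1\colon U\to\P^1_U$ for the sections at $0$ and at $1$. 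First I would enlarge $\mathcal Z'$: since $\{0\}\times U$ is a section of $pr_U$, it is a closed subscheme finite over $U$ and disjoint from $\{1\}\times U$, so replacing $\mathcal Z'$ by $\mathcal Z'\cup(\{0\}\times U)$ preserves all the hypotheses of the proposition; hence I may assume $\{0\}\times U\subseteq\mathcal Z'$.

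Next, put $\cG^{ss}=q_*(\cG)$. This is a principal bundle under the semi‑simple group scheme $\bG^{ss}$ over $\P^1_U$ whose restriction to $\P^1_U-\mathcal Z'$ is trivial, with $\mathcal Z'\cap(\{1\}\times U)=\emptyset$, so I would run the proof of Theorem~\ref{th:psg} (Section~\ref{semi-simple}) on $\cG^{ss}$. That proof, which proceeds through Proposition~\ref{SchemeY22}, yields: a closed subscheme $Y\subset\P^1_U$, finite \'etale over $U$, with $Y\cap\mathcal Z'=\emptyset$, $Y\cap(\{0\}\times U)=\emptyset$ and $Y\cap(\{1\}\times U)=\emptyset$; a $\bG^{sc}$‑bundle $\cG^{sc}$ over $\P^1_U$ with $\Pi_*(\cG^{sc})=\cG^{ss}$ and $\cG^{sc}|_{\{1\}\times U}$ trivial; and a modification $\cG^{sc}_{mod}$ of $\cG^{sc}$ which coincides with $\cG^{sc}$ over $\P^1_U-Y$, has trivial closed fibre $(\cG^{sc}_{mod})_u$ over $\P^1_u$, and satisfies that $\Pi_*(\cG^{sc}_{mod})$ is trivial over all of $\P^1_U$. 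Now set $\cG_{mod}:=j_*(\cG^{sc}_{mod})$. Its closed fibre $(\cG_{mod})_u=j_*\big((\cG^{sc}_{mod})_u\big)$ is trivial over $\P^1_u$, so by the main result of~\cite{Tsy} the $\bG$‑bundle $\cG_{mod}$ is extended from $U$ along $pr_U$; and $\cG_{mod}|_{\{1\}\times U}=j_*\big(\cG^{sc}|_{\{1\}\times U}\big)$ is trivial (here $1\notin Y$ is used), so $\cG_{mod}$ is in fact trivial over the whole of $\P^1_U$.

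Now I would compare $\cG$ with $j_*(\cG^{sc})$. Since $q\circ j=\Pi$, the bundles $q_*(\cG)=\cG^{ss}$ and $q_*\big(j_*(\cG^{sc})\big)=\Pi_*(\cG^{sc})=\cG^{ss}$ agree, and as $\bT=\Ker(q)$ is central in $\bG$ the group $H^1_{et}(\P^1_U,\bT)$ acts transitively on the fibres of $q_*\colon H^1_{et}(\P^1_U,\bG)\to H^1_{et}(\P^1_U,\bG^{ss})$; hence $[\cG]=[\cT_2]\cdot[j_*(\cG^{sc})]$ for some $\bT$‑bundle $\cT_2$ over $\P^1_U$, where $\cdot$ denotes the twisting action of the central torus. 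Pulling this equality back along $i_0$ and along $i_1$, and using that $i_0^*(\cG^{sc})=i_0^*(\cG^{sc}_{mod})$ (the two coincide over $\P^1_U-Y\ni 0$) together with $j_*\big(i_0^*(\cG^{sc}_{mod})\big)=i_0^*(\cG_{mod})$, which is trivial since $\cG_{mod}$ is trivial over $\P^1_U$, and that $\cG^{sc}|_{\{1\}\times U}$ is trivial, one obtains $i_0^*(\cG)=in_*(i_0^*\cT_2)$ and $i_1^*(\cG)=in_*(i_1^*\cT_2)$ in $H^1_{et}(U,\bG)$. But $i_1^*(\cG)=\cG|_{\{1\}\times U}$ is trivial because $\{1\}\times U\subset\P^1_U-\mathcal Z'$, whence $in_*(i_1^*\cT_2)$ is trivial; and $i_0^*\cT_2=i_1^*\cT_2$ in $H^1_{et}(U,\bT)$, because the sections $i_0,i_1$ factor through $\mathbb A^1_U$, over which $pr_U^*$ is an isomorphism on $H^1_{et}(-,\bT)$ — the homotopy invariance of $H^1_{et}$ for torsors under a torus over the regular base $U$, which reduces via finite \'etale descent to the split case where $pr_U^*$ is an isomorphism on $\Pic$ — so that $i_0^*$ and $i_1^*$ are both inverse to it. Therefore $i_0^*(\cG)=in_*(i_0^*\cT_2)=in_*(i_1^*\cT_2)$ is trivial, which is the assertion.

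I expect the main obstacle to be the second step, namely verifying that running the proof of Theorem~\ref{th:psg} on $\cG^{ss}$ really supplies $\cG^{sc}$ and $\cG^{sc}_{mod}$ with \emph{all} of the listed properties simultaneously — in particular that $\Pi_*(\cG^{sc}_{mod})$ is trivial over the whole of $\P^1_U$ and not merely on the closed fibre, and that the obstruction to lifting $\cG^{ss}$ to a $\bG^{sc}$‑bundle over $\P^1_U$, a class in $H^2_{et}(\P^1_U,\Ker\Pi)$, does vanish in the present mixed‑characteristic situation. The remaining ingredients — the exact sequences of pointed sets attached to the central extensions $1\to\bT\to\bG\to\bG^{ss}\to1$ and $1\to\Ker\Pi\to\bG^{sc}\xrightarrow{\Pi}\bG^{ss}\to1$ (the hypothesis that $can_*(\cG)$ be trivial enters here, guaranteeing also that $\cG$ reduces to $\bG^{der}$), and the twisting action of $H^1_{et}(-,\bT)$ on $\bG$‑bundles — are routine bookkeeping.
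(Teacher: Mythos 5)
Your proof is correct, and it follows a genuinely different route from the one in the paper. The paper's own argument uses the hypothesis that $can_*(\cG)$ be trivial in order to realize $[\cG]$ as $in_*(\gamma)$ for a $\bG^{der}$-torsor $\gamma$, then twists $\bG^{der}$ by $pr^*_U(i^*_1(\gamma))$ so that the twisted class becomes trivial at $\{1\}\times U$, runs the elementary modification along $Y$ on the twisted $\bG^{der}$-torsor, pushes the result into $\bG(tw)$ to extract a $\bT$-torsor, and finally untwists. You instead push $\cG$ forward to $\cG^{ss}=q_*(\cG)$, re-run the construction of the proof of Theorem~\ref{th:psg} on $\cG^{ss}$ to manufacture $\cG^{sc}$ (already normalized at $\{1\}\times U$ at the moment of lifting, which supersedes the paper's explicit twist), reuse $\cG^{sc}_{mod}$ and $\cG_{mod}=j_*(\cG^{sc}_{mod})$, and then compare $\cG$ with $j_*(\cG^{sc})$ inside $H^1_{et}(\P^1_U,\bG)$ via the central extension $1\to\bT\to\bG\to\bG^{ss}\to 1$. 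The basic ingredients coincide (Proposition~\ref{SchemeY22}, Gille's theorem on the closed fibre, the main result of~\cite{Tsy}, homotopy invariance of $H^1_{et}(-,\bT)$ over a regular base), but the bookkeeping is organized around $\bG^{sc}$ and $j$ rather than around $\bG^{der}$ and the twist bijections. Your route is arguably a little cleaner precisely because it delegates the whole modification step to the already-established proof of Theorem~\ref{th:psg} instead of rerunning a parallel version of it for $\bG^{der}$.

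One consequence of your assembly worth noting: you never actually use the hypothesis that $can_*(\cG)$ is a trivial $\mathbf S$-bundle. The $\bT$-torsor $\cT_2$ exists simply because $q_*[\cG]$ and $q_*[j_*(\cG^{sc})]$ both equal $[\cG^{ss}]$ and $\bT$ is central in $\bG$, which needs nothing beyond the definition of $\cG^{sc}$; the rest (that $i^*_0(\cT_2)=i^*_1(\cT_2)$ by homotopy invariance of $H^1_{et}(-,\bT)$, and that $in_*(i^*_1\cT_2)=i^*_1[\cG]=*$) likewise does not require reducing $\cG$ to $\bG^{der}$. Your parenthetical claim that the hypothesis ``enters'' at the central-extension step does not match how you actually use it. If this is right, Proposition~\ref{red_case} holds without the $can_*$ hypothesis, and the preliminary reduction in Section~\ref{sect:outline} — the degree-$d$ covering $t\mapsto t^d$ and the auxiliary torsor $\cT_0$ arranging $can_*([\cT_0]\cdot d^*[\cG])=0$ — would be dispensable. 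This deserves a careful double-check, but I could not locate where your argument would use the hypothesis. Finally, the worry you flag about the obstruction in $H^2_{et}(\P^1_U,\Ker\Pi)$ to lifting $\cG^{ss}$ to a $\bG^{sc}$-torsor is real, but it is not a gap you have introduced: the paper's own proof of Theorem~\ref{th:psg}, which you are invoking, asserts this vanishing for a general semi-simple $\bG$ without justification, so you are on exactly the same footing there.
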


\begin{proof}[Proof of Proposition \ref{red_case}]
Since $can_*(\cG)$ is a trivial $\mathbf S$-bundle there is an element
$\gamma \in H^1_{et}(\P^1_U,\bG^{der})$ such that $in_*(\gamma)=[\cG]$ in $H^1_{et}(\P^1_U,\bG)$.
The $U$-group scheme $\bG^{der}$ is semi-simple.
Let $Y$ be the closed subscheme in $\mathbb A^1_U$ as in Proposition \ref{SchemeY22}.

Point out here, that the finiteness of $k(v)$ {\bf guaranties} that the $k(v)$-group
$\bG_{v}$ is quasi-split.
{\bf It is quite plausible that Proposition \ref{SchemeY22}
is not true if $\bG_{v}$ is not quasi-split.}

Let $\bG^{der}(tw)$ be the group $\bG^{der}$ twisted via the element $\gamma_1:=pr^*_U(i^*_1(\gamma))$.
Let $\bG(tw)$ be the group $\bG$ twisted via the element $in_*(\gamma_1)$.
There are set bijections
$$H^1_{et}(\P^1_U,\bG^{der}) \xrightarrow{Twist^{der}_{\P^1\times U}} H^1_{et}(\P^1_U,\bG^{der}(tw)),$$
$$H^1_{et}(U,\bG^{der}) \xrightarrow{Twist^{der}_U} H^1_{et}(U,\bG^{der}(tw)).$$
Put
$\gamma_{tw}=Twist_{\P^1\times U}(\gamma)$.
Since $Twist^{der}_U\circ i^*_1=i^*_1\circ Twist^{der}_{\P^1\times U}$ and
$Twist^{der}_U(i^*_1(\gamma))=*$ we conclude that
$$i^*_1(\gamma_{tw})=*\in H^1_{et}(U,\bG^{der}(tw)).$$
Let $\cG^{der}(tw)$ be a $\bG^{der}(tw)$-bundle over $\P^1_U$ with $[\cG^{der}(tw)]=\gamma_{tw}$.
The equality $*=i^*_1(\gamma_{tw})$ shows that the $\bG^{der}(tw)$-bundle $i^*_1(\cG^{der}(tw))$ is trivial.
Now by the Gille Theorem \cite{GilleTorseurs}
the $\bG^{der}(tw))_u$-bundle $\cG^{der}(tw)_u$ is Nisnevich locally trivial.
Thus, there is a $\bE\bG^{der}(tw)$- modification $\cG^{der}(tw)_{mod}$ along $Y$ of the
$\bG^{der}(tw)$-bundle $\cG^{der}(tw)$ such that $\cG^{der}(tw)_{mod}$ is trivial.

In this case $\Pi_*(\cG^{der}(tw)_{mod})$ is trivial and it is
a $\bE\bG^{ss}(tw)$- modification $\cG^{ss}(tw)_{mod}$ along $Y$ of the
$\bG^{ss}(tw)$-bundle $\cG^{ss}(tw)$.

If we put $\cG(tw)_{mod}=in_*(\cG^{der}(tw)_{mod})$, then $\cG(tw)_{mod}$ is
a $\bE\bG(tw)$- modification along $Y$ of the
$\bG(tw)$-bundle $\cG(tw)$.
Moreover, \\
(a) $q_*(\cG(tw)_{mod})=\cG^{ss}(tw)_{mod}$ is trivial;\\
(b) $\cG(tw)_{mod}|_{\P^1_U-Y}=\cG(tw)|_{\P^1_U-Y}$. \\
By the property (a) there is a $\bT$-bundle $\T$ over $\P^1_U$ such that
$\cG(tw)_{mod}=in_*(\cT)$. Since $\bT$ is a $U$-torus the $\bT$-bundle
$\cT|_{\mathbb A^1_U}$ is extended from $U$. Thus, the $\bG(tw)$-bundle
$\cG(tw)_{mod}|_{\mathbb A^1_U}$
is extended from $U$.
Recall that $\{1\}\times U$ is in $\P^1_U-Y$. Now the property (b) shows that
$\cG(tw)_{mod}|_{\mathbb A^1_U}$ is a trivial $\bG(tw)$-bundle.
Particularly, $\cG(tw)_{mod}|_{\mathbb A^1_U-Y}$ is trivial.
Since $\cG(tw)_{mod}$ is a modification of $\cG(tw)$ along $Y$ it follows that
$$\cG(tw)_{mod}|_{\mathbb A^1_U-Y}=\cG(tw)|_{\mathbb A^1_U-Y}.$$
Thus, $\cG(tw)|_{\mathbb A^1_U-Y}$ is a trivial $\bG(tw)$-bundle.
Since $\{0\}\times U$ is in $\P^1_U-Y$ it follows that
$i^*_0(\cG(tw))$ is a trivial $\cG(tw)$-bundle.
{\bf Applying now the set isomorphism
$Twist^{-1}_U: H^1_{et}(U,\bG(tw))\to H^1_{et}(U,\bG)$
we get an equality
$$i^*_0(in_*(\gamma))=i^*_1(in_*(\gamma)) \in H^1_{et}(U,\bG).$$
Since $in_*(\gamma)=[\cG]\in H^1_{et}(U,\bG)$ and $\{1\}\times U$ is in $\P^1_U-\mathcal Z'$ we get
$$i^*_1(in_*(\gamma))=[\cG|_{\{1\}\times U}]=* \in H^1_{et}(U,\bG).$$
Thus, $i^*_0([\cG])=* \in H^1_{et}(U,G)$. Proposition \ref{red_case} is proved.
Theorem \ref{th:psv} in the case of finite residue field $k(v)$ is proved.
}
\end{proof}

\begin{proof}[Proof of Theorem~\ref{th:psv}]
If the $U$-group scheme $\bG$ is semi-simple the result follows easily from
\cite[Thm. Main]{PSt} (see Theorem \ref{th:ps}) and Theorem \ref{th:psg}.

Suppose the radical of the $U$-group scheme $\bG$ is non-trivial.
The finite field $k(v)$ case of Theorem~\ref{th:psv} is already derived from Theorem \ref{th:psg}
in this Section.

The infinite field $k(v)$ case of Theorem~\ref{th:psv}
is proved in in \cite{PSt}.
Namely, it is derived there
from \cite[Thm. Main]{PSt} (= Theorem \ref{th:ps}) using arguments which are {\bf quite similar} to
the above arguments in the present Section.
{\bf However in \cite{PSt} it is not possible to use Proposition \ref{SchemeY22},
since most probably this Proposition is not true if the field $k(v)$ is infinite.
This is why Proposition \ref{SchemeY22} is replaced in \cite{PSt} with another one.
}

Theorem~\ref{th:psv} is proved.\qedhere
\end{proof}

\begin{proof}[Proof of Theorem \ref{MainThm1}]
We already completed the proof of Theorem \ref{th:psv}.
Theorem \ref{MainHomotopy} is proved in \cite{GPan}.
Clearly, Theorem \ref{MainThm1} is a direct consequence
of Theorems \ref{MainHomotopy} and \ref{th:psv}.
Thus, Theorem \ref{MainThm1} is proved. \qedhere
\end{proof}

\end{document}